\numberwithin{equation}{section}
\def\R{{\mathbb{Z}}}
\def\Z{\mathbb Z}
\def\G{\Gamma}
\def\lr#1{\left\langle #1\right\rangle}
\def\lrb#1{\left(#1\right)}
\def\a{\alpha}
\def\g{\gamma}
\def\kk{\mathds{k}}
\def\Z{{\mathbb Z}}
\def\mq{m_q}
\def\x{{\bold x}}
\def\y{{\bold y}}
\def\e{{\bold e}}
\def\q{{\bold q}}
\def\mb#1{\left[\kern-0.15em\left[ {#1}
 \right]\kern-0.15em\right]}
\def\s#1{s_{#1,q}}
\def\g#1{\sigma_{#1, q}}
\def\siq{s_{i, q}}
\def\sjq{s_{j, q}}
\def\ol{\overline}
\def\uldim{\underline{\bf dim\,}}
\newtheorem{thm}{Theorem}[section]
\newtheorem{lem}[thm]{Lemma}
\newtheorem{prop}[thm]{Proposition}
\newtheorem{defn}[thm]{Definition}
\newtheorem{ex}[thm]{Example}
\newtheorem{remark}[thm]{Remark}
\date{}
\begin{document}


\thispagestyle{empty}

\begin{center}
{\bf{\Large  $q$-Coxeter matrix and $q$-Cartan matrix for a homogeneous bound quiver}
 \footnote{$^*$
%
\small Email: slyang@bjut.edu.cn}
}

\bigbreak

\normalsize Shilin Yang$^*$\\
Faculty of Science,
Beijing University of Technology\\
Beijing 100124, P. R.  China
\end{center}

\numberwithin{equation}{section}

\begin{quote}
{\noindent{\bf Abstract.}
The aim of this paper is to introduce the concept of $q$-Coxeter
transformation and $q$-Coxeter matrix for a connected acyclic bound quiver $(Q, I)$
with a homogeneous relations $I$, then to establish the
relationship between $q$-Coxeter matrix and $q$-Cartan matrix of $(Q, I)$.
}
\end{quote}

\noindent {\bf Key Words:}\quad $q$-Coxeter matrix; $q$-Cartan matrix; $q$-reflection.

\noindent {\bf Mathematics Subject Classification:}\quad 16G20, 51F15.

\section{Introduction}
The definition  of $q$-Cartan matrix, occurred in the earlier literature as the filtered Cartan matrix, for instance see \cite{Ful}. It is reintroduced
for a homogeneous algebra by Bessenrodt and Holm \cite{BT1} in order to obtain new invariants for derived equivalences of gentle algebras.
It is shown that the $q$-Cartan matrix of a skew-gentle algebra is closely related to the $q$-Cartan matrix of the underlying gentle algebra.

The entries of the $q$-Cartan matrix are Poincar\'{e}
polynomials of the graded homomorphism spaces between projective
modules. Loosely speaking, when counting paths in the quiver of the algebra, each
path is weighted by some power of $q$ according to its length. Clearly,
specializing to $q=1$ gives back the usual Cartan matrix $C_A$.
The point of view of $q$-Cartan matrices provides some
new insights into the invariants of the Cartan matrix.

   In the paper \cite{ZZ}, the authors show that $q$-Cartan matrix of
   any self-injective Nakayama algebra is diagonalizable.
 The determinant of $q$-Cartan martrix is computed
   explicitly.

Up to now, there are few works concern the concept of $q$-Coxeter transformation or $q$-Coxeter matrix for
a homogeneous algebra. Motivated by the concept of $q$-Cartan matrix, we shall introduce the concept of $q$-Coxeter
transformation and $q$-Coxeter matrix for a given acyclic quiver $Q$ or homogeneous bound acyclic quiver $(Q, I)$ of
finite global dimension using $q$-\emph{reflections}, the
relationships between its $q$-Coxeter matrix and the $q$-Cartan matrix
are established. More precisely, the $q$-Coxeter transformation is defined by the composition of a series of so-called $q$-reflections $\s{a_i}$ or $\g{i}$ for an admissible numbering $(a_1, \cdots, a_n)$ of $Q$.
The $q$-Coxeter matrix is defined to be the matrix of the corresponding $q$-Coxeter transformation under
a fixed associated basis of $\R^n$. Then we show that the explicit relations  between $q$-Coxeter matrix
and $q$-Cartan matrix, where  $q$-Cartan matrix is referred to the definition of Bessenrodt and
Holm introduced in \cite{BT1}.
In the process one difficulty we overcome is to define
some suitable simple $q$-refections. Fortunately, this can be done
for a homogeneous bound acyclic quiver $(Q, I)$ of finite global dimension, in particular an acyclic quiver $Q$.
It is remarked that
specializing to $q=1$ gives back the usual relations between Coxeter matrix and Cartan matrix.
We hope that this discovery has a good application in
studying representations of a homogeneous  bound acyclic quiver of finite global dimension.

The paper is arranged as follows. In Section 2, we introduce the concept of $q$-reflections  for an acyclic quiver. Some basic properties are established. In Section 3 and Section 4, we introduce the concept of $q$-Coxeter transformations and $q$-Coxeter matrix for an acyclic quiver, the variant relations between $q$-Coxeter matrices and $q$-Cartan matrices are described.  In Section 5, we generalize the concepts and the results
to a homogeneous bound acyclic quiver of finite global dimension.

    In the sequel, we always
    denote $\mb{m, n}=\left\{m, m+1, \cdots, n\right\}$ for $n\ge m$, $\kk$ is an algebraic closed field.

\section{$q$-Cartan matrix for a homogeneous bound quiver}

A finite directed graph $Q$ is called a (finite) quiver. For any arrow $\a$ in $Q$ we
denote by $s(\a)$ its start vertex and by $t(\a)$ its end vertex.
An oriented path $p$ in $Q$
of length $r$ is a sequence $p=\a_1\a_2\cdots\a_r$ of arrows $\a_i$ such that $t(\a_i)=s(\a_{i+1})$
for all $i=1, \cdots, r-1;$ its start vertex is $s(p):=s(\a_1)$ and its end vertex is
$t(p)=t(\a_r)$ (For each vertex $a$ in $Q$, a trivial path $e_a$ of length $0$,
having $a$ as its start and end vertex).

The path algebra $\kk Q$, has as a basis the set of all oriented
paths in $Q$. Multiplication is defined by concatenation of paths: the product of two
paths $p$ and $q$ is defined to be the concatenated path $pq$ if $t(p)=s(q)$, and zero
otherwise.
More general algebras can be obtained by introducing relations on a path algebra.
An ideal $\mathcal{I}\subset \kk Q$ is called admissible if $\mathcal{I}\subseteq J^2$, where $J$ is the
ideal of $\kk Q$ generated by the arrows of $Q$. Any  admissible ideal $\mathcal{I}$ of $\kk Q$ for a finite quiver $Q$ is generated by linear combinations of paths having the same start vertex and the same end vertex.
The pair $(Q, \mathcal{I})$, where $Q$ is a quiver and $\mathcal{I}\subseteq \kk Q$ is an
admissible ideal, is called a quiver with bound relations.
For any bound quiver $(Q, \mathcal{I})$, we can consider the factor algebra
$A=\kk Q/\mathcal{I}$ and identify paths in the quiver $Q$ with their cosets
in $A$. Let $Q_0$ denote the set of vertices of $Q$. For any $a\in Q_0$, we get $1=\sum\limits_{a\in Q_0} e_a$
and
$A=\bigoplus\limits_{a\in Q_0} e_aA$.
The (right)
$A$-modules $P(a):= e_a A$ are the indecomposable projective $A$-modules.
It is known that the category of mod-$A$ of finitely generated right $A$-modules is equivalent to the category rep$_\kk(Q, \mathcal{I})$ of finite dimensional $\kk$-linear representations of $Q_A$ bound by $\mathcal{I}$.

Now, let $(Q, \mathcal{I})$ be a quiver with homogeneous ideal $\mathcal{I}$, that is the relation ideal $\mathcal{I}=\lr{I}$ is generated by the set $I$ of linear combinations of paths having the same length. The path algebra
$\kk Q$ is a graded algebra, with grading given by path lengths. Since $\mathcal{I}$ is homogeneous,
the factor algebra $A=\kk Q/\mathcal{I}$ inherits this grading.The morphism spaces
${\rm Hom}_A(P(j), P(i))\cong e_i Ae_j$ become graded vector spaces.  In this case, $(Q, I)$ is called {\it a  homogeneous bound quiver} and the  graded algebra $A=\kk Q/\mathcal{I}$ is called the {\it homogeneous bound quiver algebra associated to $(Q, I)$}.

\begin{defn}{\rm (see \cite{BT1}).} Let $A=\kk Q/\lr{I}$  be a finite-dimensional homogeneous bound quiver algebra associated to $(Q, I)$. For any vertices $i$ and $j$ in $Q$ let
$$e_iAe_j=\bigoplus_n\lrb{e_iAe_j}_n$$
be the graded components.
The $q$-Cartan matrix $C_q:=C_{Q, I}(q)=(c_{ij}(q))$ of $(Q, I)$ is defined by
the matrix with entries
$$c_{ij}(q):=\sum_n\dim_{\kk}\lrb{e_iAe_j}_n q^n\in \Z[q].$$
In particular, if $I=0$, the corresponding $q$-Cartan matrix
is called
$q$-Cartan matrix of $Q$.
\end{defn}

\begin{ex} Let
$$(Q, I): \xymatrix@=1.5cm{1&2&3\\
\ar@<-.5ex>"1,1";"1,2"_\alpha
\ar@<-.5ex>"1,2";"1,1"_\delta
\ar@<-.5ex>"1,2";"1,3"_\beta
\ar@<-.5ex>"1,3";"1,2"_\gamma
}$$
be the homogeneous bound quiver $(Q, I)$ with $I=\{\alpha\beta,
\ \gamma\delta,\ \delta\alpha-\beta\gamma\}$.
The $q$-Cartan matrix of~$(Q, I)$ has the form
$$C_q = \mbox{{\small $\left( \begin{array}{ccc}
1+q^2 & q & 0 \\ q & 1+q^2 & q \\ 0 & q & 1+q^2
\end{array} \right)$}}.
$$
\end{ex}

For the study,  we always assume that $(Q, I)$ is a finite connected homogeneous bound quiver with
$|Q_0|=n$
without loss of generality.
As is seen, $A=\kk Q/\lr{I}$  is a basic and connected finite dimensional graded $\kk$-algebra with an identity, having
${\rm Rad}(A)/\lr{I}$ as radical and $\{e_a| a\in Q_0\}$ as complete set of primitive orthogonal idempotents.

A connected  homogeneous bound quiver $(Q, I)$ is said to be of finite global dimension if the corresponding basic connected  homogeneous bound quiver algebra $\kk Q/\lr{I}$ is of  finite global dimension. The notation  ${\rm gldim}(Q, I)$ means the global dimension ${\rm gldim}(\kk Q/\lr{I})$ of the algebra $\kk Q/\lr{I}$.

Let $a\in Q_0$, we denote by $S(a)$ the representation $(S(a)_b, \varphi_\a)$ of $Q$ defined as
$$S(a)_b=
\left\{
  \begin{array}{ll}
    0, & \hbox{ if } b\ne a; \\
    \kk, & \hbox{ If } b=a,
  \end{array}
\right.
$$
and
$$\varphi_\a=0 \qquad \hbox{ for all } \a\in Q_1.$$
Let $P(a)=e_aA,$ where $a\in Q_0$ and $A=\kk Q/\lr{I}$.

The following results are well-known.

\begin{lem}\label{lem2-1}
\begin{enumerate}
  \item For any $a\in Q_0$, $S(a)$ viewed as an  $A$-module is isomorphic to  the top
  of the indecomposable projective $A$-module $P(a).$

  \item The set $\{S(a)| a\in Q_0\}$  is a complete set of representations of the isomorphism classes of the
  simple $A$-modules.

  \item If $P(a)=(P(a)_b, \varphi_\beta)$, then $P(a)_b$ is the $\kk$-vector space with basis
the set of all the $\ol{w}=w+I$, with $w$  a path from $a$ to $b$ and for an arrow $\beta: b\to c$, the $\kk$-linear
map $\varphi_\beta: P(a)_b\to P(a)_c$ is given by
the right multiplication by $\ol{\beta}=\beta+I$.
  \item If $I(a)=(I(a)_b, \varphi_\beta)$, then $I(a)_b$ is the dual of $\kk$-vector space with basis
the set of all the $\ol{w}=w+I$, with $w$  a path from $b$ to $a$ and for an arrow $\beta: b\to c$, the $\kk$-linear
map $\varphi_\beta: I(a)_b\to I(a)_c$ is given by
the left multiplication by $\ol{\beta}=\beta+I$.
\end{enumerate}
\end{lem}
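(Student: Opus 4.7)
The plan is to base everything on the natural idempotent decomposition
$$A=\bigoplus_{a,b\in Q_0}e_aAe_b,$$
where, since $\lr{I}$ is admissible and generated by relations with common source and common target, each summand $e_aAe_b$ admits as a $\kk$-basis the set of cosets $\ol w=w+\lr{I}$ with $w$ a path of $Q$ from $a$ to $b$. With this in hand, parts (3) and (4) amount to identifying the relevant $\kk$-vector spaces and transporting structure under the standard equivalence $\mathrm{mod}\text{-}A\simeq\mathrm{rep}_\kk(Q,I)$, while (1) and (2) follow by inspecting $P(a)/P(a)\,\mathrm{Rad}(A)$.

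For (3), I would write $P(a)=e_aA=\bigoplus_b e_aAe_b$ and set $P(a)_b:=e_aAe_b$; its path basis is exactly as described. For an arrow $\beta\colon b\to c$, right multiplication by $\ol\beta$ maps $e_aAe_b$ into $e_aAe_c$ (using $e_b\ol\beta e_c=\ol\beta$) and sends $\ol w\mapsto\ol{w\beta}$ on basis elements. Under the categorical equivalence between right $A$-modules and representations of $(Q,I)$, this right multiplication is precisely the structure map $\varphi_\beta$.

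For (4), I would use the standard duality $I(a)=D(Ae_a)$ with $D=\operatorname{Hom}_\kk(-,\kk)$ and right $A$-action $(f\cdot x)(y)=f(xy)$. Decomposing $Ae_a=\bigoplus_b e_bAe_a$ yields $I(a)e_b=D(e_bAe_a)$, hence $I(a)_b$ is the $\kk$-dual of the path space from $b$ to $a$. For $\beta\colon b\to c$ and $f\in I(a)e_b$, a direct computation gives $(f\ol\beta)(y)=f(\ol\beta y)$ for $y\in e_cAe_a$, which identifies $\varphi_\beta\colon I(a)_b\to I(a)_c$ as the $\kk$-linear dual of the left-multiplication map $\ol\beta\cdot\colon e_cAe_a\to e_bAe_a$; this is the intended reading of the lemma's phrasing.

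For (1) and (2), the Jacobson radical $\mathrm{Rad}(A)$ equals the (nilpotent) ideal generated by the cosets of arrows, so $P(a)\,\mathrm{Rad}(A)$ is the span of cosets of paths from $a$ of length $\ge 1$. Consequently $P(a)/P(a)\,\mathrm{Rad}(A)$ is one-dimensional, spanned by the image of $\ol{e_a}$, with $\ol{e_a}$ acting as the identity and $\ol{e_b}$ ($b\ne a$) as zero; this agrees with $S(a)$. Since $A$ is basic and $\{e_a\}_{a\in Q_0}$ is a complete set of primitive orthogonal idempotents, $A_A=\bigoplus_a P(a)$ is a decomposition into pairwise non-isomorphic indecomposable projectives, whence $\{S(a)=\mathrm{top}\,P(a)\}_{a\in Q_0}$ exhausts the isomorphism classes of simple right $A$-modules. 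The only genuine obstacle is bookkeeping: one must track the source/target conventions for paths and the left/right module conventions in the duality used for $I(a)$ consistently with the author's notation.
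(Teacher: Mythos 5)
Your proof is correct, and it is the standard argument: the paper itself offers no proof of this lemma, simply recording the statements as ``well-known'' (they are essentially Lemma III.2.4 and Proposition III.2.2 of the cited book of Assem--Simson--Skowro\'{n}ski), so your write-up supplies exactly the canonical justification the author is implicitly invoking. The only caveat worth noting is one you inherit from the statement itself rather than introduce: in the presence of relations such as $\delta\alpha-\beta\gamma$ the cosets $\ol{w}$ of distinct parallel paths need not be linearly independent, so ``basis'' in (3) and (4) should strictly be read as ``spanning set'' (or one selects a subset of path cosets forming a basis); this does not affect any of your deductions.
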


We define
$$\uldim_q S(i):=\uldim S(i)\ \hbox{ \big(the  dimension vector of } S(i) \big ), $$
and
$$\uldim_q P(i)=\left(\sum_k\left(\dim (P(i)e_1\right)_k q^k, \cdots, \sum_k\left(\dim (P(i)e_n\right)_k q^k \right)^T,$$

The definition of $\uldim_q I(i)$  is similar.

By Lemma \ref{lem2-1}, one sees that $\dim (P(i)e_j)_k=\dim (e_iAe_j)_k$ is the number of $k$-length paths from $i$ to $j$,
and
$$P(i)=\bigoplus_{k\ge 0} P(i)_k$$ as a $\kk$-vector space, where $P(i)_k$ is the $\kk$-space
spanned by $k$-length sub-paths of all paths which spans $P(i)$.

\begin{prop} \label{prop2-6}
Let $C_q$  be the $q$-Cartan matrix of a connected  homogeneous bound quiver $(Q, I)$.  Then
\begin{enumerate}
  \item The $i$-th row of $C_q$ is $\left(\uldim_q P(i)\right)^T.$
  \item The $i$-th column of $C_q$ is $\uldim_q I(i).$
  \item $\uldim_q P(i)= C_q^T\cdot\uldim_q S(i)$.
  \item $\uldim_q I(i)=C_q\cdot\uldim_q S(i).$
\end{enumerate}
\end{prop}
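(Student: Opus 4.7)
The plan is to reduce everything to a careful bookkeeping of the numbers $\dim_{\kk}(e_iAe_j)_k$, since these are by definition the coefficients of the entries $c_{ij}(q)$ of $C_q$. Once parts (1) and (2) are established, parts (3) and (4) follow immediately because $\uldim_q S(i)$ is the standard basis vector $\e_i$.

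First I would prove (1). By Lemma \ref{lem2-1}(3), the $b$-th component $P(i)_b$ of the projective $P(i)$ has a $\kk$-basis indexed by cosets $\ol{w}=w+I$ of paths from $i$ to $b$; graded by path length this means
\[
\sum_k \dim_\kk (P(i)e_b)_k\, q^k \;=\; \sum_k \dim_\kk (e_iAe_b)_k\, q^k \;=\; c_{ib}(q).
\]
Reading off coordinates, the $b$-th component of the column vector $\uldim_q P(i)$ equals the $(i,b)$ entry of $C_q$, so $(\uldim_q P(i))^T$ is exactly the $i$-th row of $C_q$.

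Next I would prove (2) by the dual argument. Lemma \ref{lem2-1}(4) says $I(i)_b$ is the $\kk$-dual of the space with basis the cosets of paths from $b$ to $i$, so
\[
\sum_k \dim_\kk (I(i)_b)_k\, q^k \;=\; \sum_k \dim_\kk (e_bAe_i)_k\, q^k \;=\; c_{bi}(q).
\]
Hence the $b$-th entry of $\uldim_q I(i)$ is the $(b,i)$ entry of $C_q$, i.e.\ the $i$-th column. Parts (3) and (4) are then pure linear algebra: $\uldim_q S(i)=\e_i$ by definition of the dimension vector of the simple, so $C_q^T\e_i$ is the $i$-th row of $C_q$ written as a column, which by (1) equals $\uldim_q P(i)$; similarly $C_q\e_i$ is the $i$-th column of $C_q$, which by (2) equals $\uldim_q I(i)$.

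There is no real obstacle, the only point requiring a little attention is keeping the conventions straight: $P(i)$ is a right module so its $b$-component is spanned by paths $i\to b$, whereas $I(i)$ is defined via the $\kk$-dual and its $b$-component corresponds to paths $b\to i$. This asymmetry is precisely what exchanges rows and columns of $C_q$ between parts (1) and (2), and correspondingly exchanges $C_q^T$ and $C_q$ between parts (3) and (4).
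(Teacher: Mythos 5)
Your proposal is correct and follows essentially the same route as the paper: identify the graded dimensions of $P(i)e_b$ and $I(i)e_b$ with $\dim_\kk(e_iAe_b)_k$ and $\dim_\kk(e_bAe_i)_k$ respectively to get (1) and (2), then deduce (3) and (4) from the fact that the $\uldim_q S(i)$ form the standard basis. Your remark on the row/column asymmetry coming from the right-module versus dual conventions is a helpful clarification but does not change the argument.
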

\begin{proof}
The statement (1) follows from the definition and the obvious equality $e_iAe_j=P(i)e_j$.
The statement (2) follows from the definition and the obvious equality $\sum\limits_k \dim\left(I(i)e_j\right)_k q^k=
\sum \limits_k \dim\left(e_jAe_i\right)_kq^k=c_{ji}(q)$ for all $i, j\in Q_0$.
The left statements are easily from (1) and (2) and the facts that the vectors
$\uldim_q S(1), \cdots, \uldim_q S(n)$
form the standard basis of the free abelian group $\mathbb{Z}^n$, where $n=|Q_0|$.
\end{proof}

\begin{prop} \label{prop2-7} Let $C_q$  be the $q$-Cartan matrix of a connected  homogeneous bound quiver $(Q, I)$ of finite global dimension. Then
$\det C_q\in \{1, -1\}$. In particular,
$$C_q\in GL(n, \mathbb{Z}[q])=\left\{ A\in M_n(\mathbb{Z}[q])|\det(A)\in\{1, -1\}\right\}.$$
\end{prop}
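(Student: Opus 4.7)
The plan is to invert $C_q$ over $\mathbb{Z}[q]$ by using the finite global dimension hypothesis to express each simple as an alternating sum of (shifted) projectives in the graded Grothendieck group, and then to deduce that $\det C_q$ is a unit of $\mathbb{Z}[q]$.

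First I would recall that since $A=\kk Q/\langle I\rangle$ is positively graded with $A_0\cong\kk^n$ semisimple (so each $S(i)$ sits in degree $0$), every graded simple module admits a graded minimal projective resolution
\[
0\longrightarrow P^{(d_i)}\longrightarrow\cdots\longrightarrow P^{(1)}\longrightarrow P^{(0)}\longrightarrow S(i)\longrightarrow 0,
\]
where each $P^{(k)}=\bigoplus_{j,m} P(j)[m]^{\,n^{(k)}_{j,m}}$ is a finite direct sum of shifts of indecomposable projectives, and $d_i\le{\rm gldim}(Q,I)<\infty$. Homogeneity of $I$ ensures that the syzygies are themselves graded, and positivity of the grading forces the shifts $m$ appearing in $P^{(k)}$ to be nonnegative, so that $q^m\in\Z[q]$ (and not a Laurent polynomial).

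Next, taking graded dimension vectors is additive on short exact sequences of graded modules, and $\uldim_q(P(j)[m])=q^m\,\uldim_q P(j)$. Applied to the resolution above this yields
\[
\uldim_q S(i)\;=\;\sum_{k=0}^{d_i}(-1)^k \uldim_q P^{(k)}\;=\;\sum_{j=1}^{n} f_{ij}(q)\,\uldim_q P(j),
\]
with $f_{ij}(q):=\sum_{k,m}(-1)^k n^{(k)}_{j,m} q^m\in\Z[q]$. Assembling the $f_{ij}(q)$ into a matrix $F=(f_{ij}(q))\in M_n(\Z[q])$ and using Proposition \ref{prop2-6}(3), which reads $\uldim_q P(i)=C_q^{T}\!\cdot\!\uldim_q S(i)$, I would translate the displayed identities into matrix form. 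Since the vectors $\uldim_q S(i)$ are the standard basis of $\Z[q]^n$, this gives $C_q\cdot F^{T}=I$, i.e.\ $C_q$ admits a two-sided inverse $F^{T}\in M_n(\Z[q])$.

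Finally, $\det(C_q)\cdot\det(F^{T})=1$ in $\Z[q]$, and the unit group of the integral domain $\Z[q]$ is $\{1,-1\}$, so $\det(C_q)\in\{1,-1\}$ and consequently $C_q\in GL(n,\Z[q])$. The only delicate point is the graded upgrade of the projective resolution; once one checks that homogeneity of $I$ and positivity of the grading make every $P^{(k)}$ a direct sum of \emph{nonnegatively} shifted projectives, everything reduces to a clean argument in the graded Grothendieck group. I expect this to be the only real obstacle; the rest is bookkeeping with the bases $\{\uldim_q P(i)\}$ and $\{\uldim_q S(i)\}$.
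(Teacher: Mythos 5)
Your proposal is correct and follows essentially the same route as the paper: use finite global dimension to get a finite (graded) projective resolution of each simple, pass to $q$-dimension vectors to express the standard basis as a $\Z[q]$-linear combination of the $\uldim_q P(j)$, and conclude that $C_q$ has an inverse over $\Z[q]$, forcing $\det C_q\in\{1,-1\}$. If anything, you are more careful than the paper about the graded shifts $P(j)[m]$ contributing factors $q^m$, a point the paper glosses over when it writes each $P_j$ as a sum of copies of the $P(i)$.
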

\begin{proof}
Let $n=|Q_0|$ and $a\in Q_0$. By the hypothesis, the simple module $S(a)$ has a projective resolution
$$0\to P_{m_a}\overset{p_{m_a}}{\longrightarrow} \cdots\overset{p_1}{\longrightarrow} P_1\overset{p_0}{\longrightarrow}P_0\overset{\varepsilon}{\longrightarrow} S(a)\to 0$$
in mod-$A$, where $m_a$ is finite. It follows that
$$0\to (P_{m_a})_0\overset{{p_{m_a}|}_0}{\longrightarrow}\cdots \overset{{p_1|}_0}{\longrightarrow} (P_1)_0\overset{{p_0|}_0}{\longrightarrow} (P_0)_0\overset{\varepsilon}{\longrightarrow}  S(a)\to 0$$
and
$$0\to (P_{m_a})_k\overset{{p_{m_a}|}_k}{\longrightarrow}\cdots \overset{{p_1|}_k}{\longrightarrow} (P_1)_k\overset{{p_0|}_k}{\longrightarrow} (P_0)_k\to 0,$$
 are also resolution. 
Here $(P_i)_k\ (k\ge 1)$ is the $\kk$-vector space
spanned by $\kk$-paths modulo the relations $I$. Hence
$$\uldim S(i)=\uldim_q S(i)=\sum_{j=0}^{m_a} (-1)^j \uldim_q P_j.$$

By the unique decomposable theorem, each of the modules $P_j$ is the
direct sum of finitely many copies of the modules $P(1), \cdots, P(n)$. Therefore the $a$-th standard basis vector $\uldim S(a)$ of $\mathbb{Z}^n$ is a linear combination of the vector $\uldim_q P(1), \cdots, \uldim_q P(n)$.

It follows that there exists $B\in M_n(\mathbb{Z}[q])$ such that
$$E=\left(
      \begin{array}{c}
        \left(\uldim_q S(1)\right)^T\\
        \vdots \\
        \left(\uldim_q S(n)\right)^T \\
      \end{array}
    \right)=B\left(
              \begin{array}{c}
                \left(\uldim_q P(1)\right)^T \\
                \vdots \\
                \left(\uldim_q P(n)\right)^T \\
              \end{array}
            \right),
$$
where $E$ is the identity matrix.

Consequently, $E=BC_q$ and the result follows.
\end{proof}

By Proposition \ref{prop2-7}, we see that the inverse of $C_q$ belongs to $GL_n(\Z[q])$.

\section{$q$-reflections for an acyclic quiver}

Let  $Q=(Q_0, Q_1)$ be an acyclic quiver associated to
the underlying graph  $\G=(\G_0, \G_1)$, where
$\G_0$ is the set of vertices of $\G$ and
$\G_1$ is the set of edges of $\G$. We  write
$\G_0=\{1, \cdots, n\}\subseteq \mathbb{N}$ if $|\G_0|=n$.
Then $Q$ is said  an orientation of $\G$.
Vertices, which are joined by edges, are called neighbours.
Let $\G_0(i)$ be the set of  neighbours of $i$, and
$a_{ij}=a_{ji}$ the number of edges in $\G$ between
 $i$ and $j$.

 Let
$$\R^\G:=\R^n=\left\{ \x=(x_i)\ |\ x_i\in \R, i\in \G\right\},$$
$q$ a parameter and
$\mq=q^2a_{ij}a_{ji}$ for $i\ne j$.

Let $\e_1, \e_2, \cdots, \e_n$ be a basis of $\R^\G$ and
$\x\in\R^\G$, which is equivalent to $\x=\sum\limits_{k\in\G_0}x_k\e_k\in\R^\G$.

For each $i\in \G_0$, we define
a $\R$-linear map $\siq: \R^n\to \lrb{\R[q]}^n$ as follows:
$$\s{i}(\x)=\x+q\sum_{j\in\G_0(i)}a_{ji}x_j\e_i-2x_i\e_i.$$
The map $\s{i}$ is said to be a $q$-reflection at  $\e_i$.
\begin{remark}\rm
If $q=1$, the definition of $\siq$
coincides with the
simple reflection $s_i$ at $\e_i$.
\end{remark}

It is easy to see that
$$\s{i}\lrb{\e_j}=\left\{
                    \begin{array}{ll}
                      -\e_i, & \hbox{ if } j=i; \\
                      \e_j+qa_{ji}\e_i, & \hbox{ if } j \hbox { is a neighbor  of }  i;  \\
                      \e_j, & \hbox{ otherwise.}
                    \end{array}
                  \right.
$$
\begin{prop} \label{prop3-2} The following statements hold.
\begin{enumerate}
  \item $\s{i}^2=1$;
  \item if $i$ and $j$ are not neighbours, then
  $\s{i}\s{j}=\s{j}\s{i};$
  \item if $i$ and $j$ are neighbours, then
$\siq\sjq\siq-\sjq\siq\sjq=
\left(\mq-1\right)\lrb{\siq-\sjq}.$
\end{enumerate}
\end{prop}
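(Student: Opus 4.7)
The plan is to reformulate $\siq$ in Householder form. Writing the definition as
$$\siq(\x) = \x + \varphi_i(\x)\,\e_i, \qquad \varphi_i(\x) := -2x_i + q\sum_{k\in \G_0(i)} a_{ki}x_k,$$
one reads off immediately the three values $\varphi_i(\e_i) = -2$, $\varphi_i(\e_j) = qa_{ji}$ for $j \in \G_0(i)$, and $\varphi_i(\e_j) = 0$ otherwise. In particular $\siq(\e_i) = -\e_i$, and when $i$ and $j$ are neighbours $\varphi_i(\e_j)\varphi_j(\e_i) = \mq$. All three identities will follow from these scalar data together with the linearity of $\siq$ and $\varphi_i$.

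For (1), applying $\siq$ twice gives $\siq^2(\x) = \x + \varphi_i(\x)\e_i + \varphi_i(\x)\siq(\e_i) = \x$, since $\siq(\e_i) = -\e_i$. For (2), when $i$ and $j$ are non-neighbours the cross-values $\varphi_i(\e_j)$ and $\varphi_j(\e_i)$ both vanish, so $\siq$ fixes $\e_j$ and $\sjq$ fixes $\e_i$; a direct expansion of each composition yields $\siq\sjq(\x) = \sjq\siq(\x) = \x + \varphi_i(\x)\e_i + \varphi_j(\x)\e_j$.

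Part (3) is the main task. I would compute $\siq\sjq\siq(\x)$ step by step. After the first two applications
$$\sjq\siq(\x) = \x + \varphi_i(\x)\e_i + \bigl[\varphi_j(\x) + qa_{ij}\varphi_i(\x)\bigr]\e_j,$$
and a third application of $\siq$, using $\siq(\e_i)=-\e_i$ and $\siq(\e_j)=\e_j+qa_{ji}\e_i$, cancels the $\e_i$-contribution of the middle term against the one already present and produces
$$\siq\sjq\siq(\x) = \x + \bigl[\mq\,\varphi_i(\x) + qa_{ji}\varphi_j(\x)\bigr]\e_i + \bigl[\varphi_j(\x) + qa_{ij}\varphi_i(\x)\bigr]\e_j.$$
The analogous formula for $\sjq\siq\sjq(\x)$ is the one obtained by the substitution $i \leftrightarrow j$, so subtracting cancels the two cross-terms $qa_{ji}\varphi_j(\x)\e_i$ and $qa_{ij}\varphi_i(\x)\e_j$ and leaves $(\mq - 1)\bigl(\varphi_i(\x)\e_i - \varphi_j(\x)\e_j\bigr)$, which is exactly $(\mq - 1)(\siq - \sjq)(\x)$.

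The only real obstacle is the bookkeeping in (3); the Householder reformulation is what keeps it manageable, since it absorbs the dependence on neighbours into the single functional $\varphi_i$ and makes the symmetry $i \leftrightarrow j$ that drives the final cancellation completely transparent.
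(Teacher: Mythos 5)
Your proof is correct and follows essentially the same route as the paper: compute $\sjq\siq(\x)$ and then $\siq\sjq\siq(\x)$ directly, observe that the result is $\x + \bigl[\mq\varphi_i(\x)+qa_{ji}\varphi_j(\x)\bigr]\e_i + \bigl[\varphi_j(\x)+qa_{ij}\varphi_i(\x)\bigr]\e_j$, and subtract the $i\leftrightarrow j$ counterpart (the paper's remainder $(\ast)$ is exactly your pair of cross-terms $qa_{ji}\varphi_j(\x)\e_i+qa_{ij}\varphi_i(\x)\e_j$). Your rank-one packaging $\siq(\x)=\x+\varphi_i(\x)\e_i$ just makes the paper's coordinate bookkeeping tidier without changing the argument.
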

\begin{proof}  Let $\x=\sum\limits_{k\in\G_0}x_k\e_k\in\R^\G$.

(1)  Firstly, we have
\begin{eqnarray*}
\siq^2(\x)&=&\siq\left(\x+q\sum_{j\in\G_0(i)}a_{ji}x_j \e_i-2x_i\e_i\right)\\
&=&\x+q\sum_{j\in\G_0(i)}a_{ji}x_j\e_i-2x_i\e_i
-q\sum_{j\in\G_0(i)}a_{ji}x_j
\e_i+2x_i\e_i\\
&=&\x.
\end{eqnarray*}
Hence $\siq^2=1$.

(2) Assume that $i$ and $j$ are not neighbours. Then
$\siq(\e_j)=\e_j$ and $\s{j}(\e_i)=\e_i$.
Therefore,
\begin{eqnarray*}
\siq\s{j}(\x)&=&\siq\left(\x+q\sum_{k\in\G_0(j)}a_{kj}x_k\e_j
-2x_j\e_j\right)\\
&=&\x+q\sum_{k'\in\G_0(i)}a_{k'i}x_{k'}\e_i-2x_i\e_i+
q\sum_{k\in\G_0(j)}a_{kj}x_k\e_j-2x_j\e_j
\end{eqnarray*}
and
\begin{eqnarray*}
\s{j}\siq(\x)&=&
\s{j}\left(\x+q\sum_{k'\in\G_0(i)}a_{k'i}x_{k'}\e_i-2x_i\e_i
\right)\\
&=&\x+q\left(\sum_{k\in\G_0(j)}a_{kj}x_k\right)\e_j-2x_j\e_j
+q\left(\sum_{k'\in\G_0(i)}a_{k'i}x_{k'}\right)\e_i-2x_i\e_i
\end{eqnarray*}
Hence for all $\x=\sum_{k\in\G_0}x_k\e_k\in\R^\G$, we have
$$\siq\s{j}(\x)=\s{j}\siq(\x)$$
and $\siq\s{j}=\s{j}\siq$.

 (3) If $j$ and $i$ are neighbours, then
\begin{eqnarray*}
\sjq\siq(\x)&=&
\sjq\left(\x+q\left(\sum_{k\in \G_0(i)} a_{ki}x_k\right)\e_i-2x_i\e_i\right) \\
&=&\sjq(\x)
+q\left(\sum_{k\in \G_0(i)} a_{ki}x_k\right)\left(\e_i+qa_{ij}\e_j\right)-
2x_i \e_i-2qa_{ij}x_i \e_j\\
&=&\siq(\x)+\sjq(\x)-\x+q^2\left(\sum_{k\in \G_0(i)}a_{ki}x_k\right)a_{ij}\e_j
-2qa_{ij}x_i \e_j,
\end{eqnarray*}
and
\begin{eqnarray*}
&&\siq\sjq\siq(\x)=
\siq\sjq\left(\x+q\left(\sum_{k\in \G_0(i)} a_{ki}x_k\right)\e_i-2x_i\e_i\right) \\
&=&\siq\left(\x+q\left(\sum_{k'\in \G_0(j)} a_{k'j}x_{k'}\right)\e_j-2x_j\e_j
+q\left(\sum_{k\in \G_0(i)} a_{ki}x_k\right)\left(\e_i+qa_{ij}\e_j\right)-
2x_i \e_i-2qa_{ij}x_i \e_j\right)\\
&=&\x+q\left(\sum_{k\in \G_0(i)} a_{ki}x_k\right)\e_i-2x_i\e_i
-q\left(\sum_{k\in \G_0(i)} a_{ki}x_k\right)\e_i+2x_i\e_i
+q\left(\sum_{k'\in \G_0(j)} a_{k'j}x_{k'}\right)\left(\e_j+qa_{ji}\e_i\right)\\
&&-2x_j\e_j-2qa_{ji}x_j\e_i+q^2\left(\sum_{k\in \G_0(i)}
a_{ki}x_k\right)a_{ij}\left(\e_j+qa_{ji}\e_i\right)
-2qa_{ij}x_i\left(\e_j+qa_{ji}\e_i\right)\\
&=&\x+q^2\left(\sum_{k'\in \G_0(j)} a_{k'j}x_{k'}+
q\left(\sum_{k\in \G_0(i)} a_{ki}x_k\right)a_{ij}
-2a_{ij}x_i\right)a_{ji}\e_i-2qa_{ji}x_j\e_i\\
&&+q\left(\sum_{k'\in \G_0(j)} a_{k'j}x_{k'}
+q\left(\sum_{k\in \G_0(i)} a_{ki}x_k\right)a_{ij}
\right) \e_j-2qa_{ij}x_i\e_j-2x_j\e_j.
\end{eqnarray*}
Hence
\begin{equation}\label{eqn1}
  \siq\sjq\siq(\x)=\mq\left(\siq(\x)-\x\right)+\sjq(\x)+(\ast),
\end{equation}
where
$$(\ast)=
q^2\left(\sum_{k\in \G_0(i)} a_{ki}x_{k}\right)a_{ij}\e_j
+q^2\left(\sum_{k\in \G_0(j)} a_{kj}x_{k}\right)a_{ji}\e_i\\
-2qa_{ji}x_j\e_i-2qa_{ij}x_i\e_j.
$$
In a similar way, we have
\begin{equation}\label{eqn2-2}
\sjq\siq\sjq(\x)
=\mq\left(\sjq(\x)-\x\right)+\siq(\x)+(\ast).
\end{equation}
Thus, if $i$ and $j$ are neighbours, (\ref{eqn1}) and (\ref{eqn2-2}) imply that
\begin{equation*}
\lrb{\siq\sjq\siq-\sjq\siq\sjq}(\x)=\left(\mq-1\right)\lrb{\siq-\sjq}\lrb{\x},
\end{equation*}
for all $\x\in \R^\Gamma$.
Hence
$$\siq\sjq\siq-\sjq\siq\sjq=\left(\mq-1\right)\lrb{\siq-\sjq}.$$
The proof is finished.
\end{proof}

We define the symmetric bilinear form $(-\,,  -)$ of an
orientation
$Q=(Q_0, Q_1)$ of $\G$ as
$$\lrb{\x, \y}_q=\sum_{i\in Q_0}x_iy_i-\frac{q}{2}\sum_{\a\in Q_1}
\lrb{x_{s(\a)}y_{t(\a)}+x_{t(\a)}y_{s(\a)}}\in\Z[q].$$
for $\x, \y\in\R^\G$ with
associated quadratic form
$$
\q_q(\x)=\sum_{i\in\G} x_i^2-q\sum_{\underset{i}\bullet-\underset{j}\bullet}
a_{ij}x_ix_j.
$$
\begin{prop}
$\lrb{\s{k}(\x), \s{k}(\y)}_q=\lrb{\x, \y}_q$
for all
$\x=\sum\limits_{i\in I} x_i\e_i$ and $\y=\sum\limits_{i\in I} y_i\e_i$.
\end{prop}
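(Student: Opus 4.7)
The plan is a direct computation that exploits the fact that $\s{k}$ is the identity on all coordinates but the $k$-th: for $j\ne k$ we have $\s{k}(\x)_j = x_j$, while $\s{k}(\x)_k = -x_k + q\sum_{j\in\G_0(k)} a_{jk} x_j$. Consequently the difference $(\s{k}(\x),\s{k}(\y))_q - (\x,\y)_q$ can only receive contributions from summands that mention the index $k$, which reduces the verification to a finite bookkeeping exercise.

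First I would rewrite the bilinear form in a more convenient shape. Because each arrow of $Q$ realizes exactly one edge of $\G$ and $a_{ij}=a_{ji}$, one has $\sum_{\a\in Q_1}(x_{s(\a)}y_{t(\a)} + x_{t(\a)}y_{s(\a)}) = \sum_{i\ne j} a_{ij} x_i y_j$, and hence $(\x,\y)_q = \sum_i x_i y_i - \tfrac{q}{2}\sum_{i\ne j} a_{ij}x_iy_j$. Introducing the abbreviations $A_k := q\sum_{j\in\G_0(k)} a_{jk} y_j$ and $B_k := q\sum_{j\in\G_0(k)} a_{jk} x_j$, one reads off $\s{k}(\x)_k = -x_k + B_k$ and $\s{k}(\y)_k = -y_k + A_k$ from the formula displayed just before Proposition~\ref{prop3-2}.

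The computation then splits into two pieces. The diagonal term changes by
$$\s{k}(\x)_k\s{k}(\y)_k - x_k y_k \;=\; -x_k A_k - y_k B_k + A_k B_k,$$
while the edge-sum contribution coming from the terms with $i=k$ or $j=k$ in $\sum_{i\ne j} a_{ij}x_iy_j$ changes by $\tfrac{2}{q}\lrb{A_k B_k - x_k A_k - y_k B_k}$, as a short computation using $a_{kj}=a_{jk}$ reveals. Multiplying the latter by $-\tfrac{q}{2}$ and adding to the diagonal change yields $0$, which is exactly the desired invariance.

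The main obstacle is purely bookkeeping: one must avoid double-counting in the arrow sum, treat symmetrically the two cases $i=k$ and $j=k$ when extracting $k$-terms, and keep careful track of the signs produced by the reflection $x_k \mapsto -x_k + B_k$. No new conceptual input is required beyond the explicit action of $\s{k}$ on the basis $\e_1,\ldots,\e_n$ already recorded in Section~3.
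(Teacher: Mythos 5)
Your computation is correct and amounts to the same direct verification the paper performs, just organized differently: the paper reduces by bilinearity to the five possible configurations of a pair of basis vectors $\lrb{\e_j,\e_k}$ relative to the reflecting index, whereas you expand $\lrb{\x,\y}_q$ in coordinates for general $\x,\y$ and check that the change in the diagonal term $x_ky_k$ cancels against $-\tfrac{q}{2}$ times the change in the terms of $\sum_{i\ne j}a_{ij}x_iy_j$ mentioning $k$. Both the rewriting of the arrow sum as $\sum_{i\ne j}a_{ij}x_iy_j$ and the final cancellation $\lrb{-x_kA_k-y_kB_k+A_kB_k}-\tfrac{q}{2}\cdot\tfrac{2}{q}\lrb{A_kB_k-x_kA_k-y_kB_k}=0$ check out.
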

\begin{proof}
It is noted that
$$\lrb{\e_i, \e_k}_q=-\frac{1}{2}qa_{ik}, \quad \lrb{\e_i, \e_i}_q=1.$$
\begin{enumerate}
  \item If  $j$ and $k$ are neighbours of $i$, then
  \begin{eqnarray*}
\lrb{\siq(\e_j), \siq(\e_k)}_q&=&\lrb{\e_j+qa_{ji}\e_i, \e_k+qa_{ki}\e_i}_q\\
&=&\lrb{\e_j, \e_k}_q+
qa_{ji}\lrb{\e_i, \e_k}_q+qa_{ki}\lrb{\e_j, \e_i}_q+q^2a_{ji}a_{ki}\lrb{\e_i,\e_i}_q\\
&=&\lrb{\e_j, \e_k}_q-\frac{1}{2}q^2a_{ji}a_{ki}-\frac{1}{2}q^2a_{ki}a_{ji}
+q^2a_{ji}a_{ki}\\
&=&\lrb{\e_j, \e_k}_q.
\end{eqnarray*}
  \item If  $j$ are neighbours of $i$ but $k$ is not, then
  \begin{eqnarray*}
\lrb{\siq(\e_j), \siq(\e_k)}_q&=&\lrb{\e_j+qa_{ji}\e_i, \e_k}_q
=\lrb{\e_j, \e_k}_q=\lrb{\e_j, \e_k}_q
\end{eqnarray*}
 If  $k$ are neighbours of $i$ but $j$ is not, the proof is similar.
  \item   If  $j, k$ are all not neighbours of $i$, then $\lrb{\siq(\e_j), \siq(\e_k)}_q=\lrb{\e_j, \e_k}_q.$
  \item If $j=i$ and $k$ is neighbours of $i$ (or if $k=i$ and $j$ is neighbours of $i$), then
    $$\lrb{\siq(\e_j), \ \siq(\e_k)}_q=-\lrb{\e_i, \e_k+a_{ki}\e_i}_q=
   \frac{1}{2}a_{ki}-a_{ki}=-\frac{1}{2}a_{ki}
    =\lrb{\e_i, \e_k}_q.$$
    \item If $j=k=i$, then
    $\lrb{\siq(\e_j), \ \siq(\e_k)}_q=\lrb{-\e_i,  -\e_i}_q=\lrb{\e_j,  \e_k}_q.$
\end{enumerate}
The proof is finished.
\end{proof}

\begin{remark} \rm By Proposition \ref{prop3-2}, we can define a $\kk$-algebra ${\mathcal A}_p$, which is
generated by $x_i, i\in \mb{1,n}$ with the following relations
\begin{eqnarray*}
   &&(a) \quad x_i^2=1;  \\
   &&(b) \quad x_ix_j=x_jx_i, \hbox{ if } \ |i-j|\ge 2;\\
  &&(c) \quad  x_ix_jx_i-x_jx_ix_j=\lrb{p-1}\lrb{x_i-x_j}, \hbox{ if }\
  |i-j|=1.
\end{eqnarray*}
However, this algebra is not new. To see this, we set
$$ \bold{i}=\sqrt{-1}, \quad \theta=\sqrt{p-1},  \quad
q=\frac{1+\theta\bold{i}}{1-\theta\bold{i}}.
$$
The parameter $q$ is well defined since $\theta\ne -\bold{i}$.
As a consequence, we see that
$A_p\cong H_{q}(n)$,  where
$H_q(n)$ is the $q$-Hecke algebra
referred to the definition in \cite[p. 158]{PM}.
In other word, we get a realization of $q$-Hecke algebra.
\end{remark}

\section{$q$-Coxeter transformation and $q$-Coxeter matrix for an acyclic quiver}
Given an acyclic orientation $Q=(Q_1, Q_0)$ of $\G$,
there is a bijection between $Q_0$ and the set $\mb{1, n}$ such that if we have
an arrow $j\to i$, then $i<j$: Now, let $a_1$ be any sink in $Q$, then consider the full
subquiver $Q(a_1)$ of $Q$ having as set of points $Q_0-\{a_1\}$; let $a_2$ be a sink of $Q(a_1)$, and continue by induction. Such a numbering of the points of $Q_0$ is called an admissible numbering of $Q$.

Let $(a_1, \cdots, a_n)$ be an admissible numbering of  $Q$ and
let $E=\R^n$ and $F=\lrb{\R[q]}^n$. Set
$$c_q=\s{a_1}\circ\cdots \circ\s{a_{n-1}}\circ\s{a_n}: E\longmapsto F.$$
It turns out that the $c_q$ only depends on the orientation
$Q$, not on the admissible numbering chosen.
Indeed, if $(a_1, \cdots, a_n)$ and
$(b_1, \cdots, b_n)$ are two admissible numberings of  $Q$, then there
exists an $i$ with $1\leq i\leq n$ such that $b_1=a_i$, because $b_1$ is a sink, there exists no arrow $a_j\to a_i$ with $j<i$ and, because it is easily seen that reflections corresponding to non-neighbours commute, we have
$\s{a_j}\s{a_i}=\s{a_i}\s{a_j}$ for all $j<i$.
The numbering $(a_i, a_1, \cdots, a_{i-1}, a_{i+1}, \cdots, a_n)$ is admissible and an obvious induction implies that
$$\s{a_1}\cdots \s{a_n}=\s{b_1}\cdots\s{b_n}.$$
Thus, $c_q$ is referred to {\bf the $q$-Coxeter transformation of $Q$}
(corresponding to the given admissible numbering).

Let $(a_1, \cdots, a_n)$ be an admissible numbering of  $Q$ and
$S_{a_i}$ the matrix of $\s{a_i}$ in the canonical $\Z$-basis of
$E=\R^n$. The matrix
$$\Phi_q=S_{a_1}\cdots S_{a_{n-1}}S_{{a_n}}$$
is called the {\bf $q$-Coxeter matrix of $Q$}.

\begin{ex} For the underline graph $\G$:
$$G: \xymatrix@C=0.5cm{
  1 \ar@{-}[rr] && 2 \ar@{-}[rr] && 3}
 $$
 By definition of $\s{i}$, we have
\begin{eqnarray*}
&&\s{1}(\e_1)=-\e_1, \s{1}(\e_2)=\e_2+q\e_1,
 \s{1}(\e_3)=\e_3,\\
 &&\s{2}(\e_1)=\e_1+q\e_2, \s{2}(\e_2)=-\e_2,
 \s{2}(\e_3)=\e_3+q\e_2.
\end{eqnarray*}
 and
$$\s{3}(\e_1)=\e_1, \s{3}(\e_2)=\e_2+q\e_3,
 \s{3}(\e_3)=-\e_3.
 $$
The corresponding matrices of $\s{i}$ are as follows
 $$
 S_1=\left(
       \begin{array}{ccc}
         -1 & q & 0 \\
         0 & 1 & 0 \\
         0 & 0 & 1 \\
       \end{array}
     \right), \quad
     S_2=\left(
       \begin{array}{ccc}
         1 & 0 & 0 \\
        q & -1 & q \\
         0 & 0 & 1 \\
       \end{array}
     \right), \quad
  S_3=\left(
       \begin{array}{ccc}
         1 & 0 & 0 \\
         0 & 1 & 0 \\
         0 & q & -1 \\
       \end{array}
     \right).
 $$

Given an orientation $Q$ of \ $\G:$
$$\xymatrix@C=0.5cm{
 1\ar@{<-}[rr] && 2 \ar[rr] && 3.}
 $$
The admissible numbering is
$1, 3, 2$,  and the $q$-Coxeter matrix of $Q$ is

$$\Phi_q=S_1S_3S_2=\left(
            \begin{array}{ccc}
              q^2-1 & -q & q^2 \\
              q & -1 & q \\
              q^2 & -q & q^2-1 \\
            \end{array}
          \right).
$$
\end{ex}

The first main result is as follows.
\begin{thm}  \label{thm1} For any acyclic  orientation $Q$ of \ $\G$,
$C_q^T=-\Phi_qC_q,$ or equivalently $\Phi_q=-C_q^TC_q^{-1}.$
\end{thm}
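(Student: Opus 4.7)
The plan is to reduce the matrix identity to a column-by-column check. By parts (3)--(4) of Proposition~\ref{prop2-6}, for every $j\in Q_0$ the $j$-th column of $C_q^T$ is $\uldim_q P(j)$ while the $j$-th column of $C_q$ is $\uldim_q I(j)$. Hence the identity $C_q^T=-\Phi_q C_q$ is equivalent to
$$\Phi_q\,\uldim_q I(j)\;=\;-\uldim_q P(j)\qquad\text{for every }j\in Q_0,$$
and I will fix $j$ and verify this by tracking the vectors obtained when the reflections in $\Phi_q=S_{a_1}S_{a_2}\cdots S_{a_n}$ are applied one at a time, from the inside out.

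After relabelling along the admissible numbering I may assume $a_k=k$, so every arrow of $Q$ goes from a higher-indexed vertex to a lower-indexed one. Two elementary decompositions (removing the first, respectively the last, arrow of a path) then give the recurrences
\begin{equation*}
c_{k,j}(q)=\delta_{kj}+q\sum_{l<k}a_{lk}\,c_{l,j}(q),\qquad
c_{j,k}(q)=\delta_{jk}+q\sum_{l>k}a_{lk}\,c_{j,l}(q),
\end{equation*}
because the outgoing neighbours of $k$ lie strictly below $k$ while its incoming neighbours lie strictly above. These two identities will drive all the bookkeeping.

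Define $v_k:=S_{k}S_{k+1}\cdots S_n\,\uldim_q I(j)$ for $k=1,\dots,n{+}1$, so that $v_{n+1}=\uldim_q I(j)$ and $v_1=\Phi_q\,\uldim_q I(j)$. I will prove by downward induction on $k$ that
$$(v_k)_m=\begin{cases}c_{m,j}(q), & m<k,\\[2pt] -c_{j,m}(q), & m\ge k,\end{cases}$$
which at $k=1$ is exactly the required identity $v_1=-\uldim_q P(j)$.

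The base case $k=n$ is a one-line check: $S_n$ only touches the $n$-th coordinate, and the first recurrence collapses the new $n$-th entry to $-\delta_{nj}=-c_{j,n}(q)$ (noting that $c_{j,n}(q)=\delta_{jn}$ since $n$ is a source). For the inductive step, $S_k$ differs from the identity only in its $k$-th row, so entries $(v_k)_m$ with $m\ne k$ are inherited from $v_{k+1}$ and match the claim by the inductive hypothesis. The heart of the calculation is the new $k$-th entry
$$(v_k)_k=-(v_{k+1})_k+q\sum_{l\sim k}a_{lk}\,(v_{k+1})_l;$$
splitting the sum at $l<k$ versus $l>k$ and substituting $c_{l,j}(q)$ and $-c_{j,l}(q)$ respectively, the two recurrences above collapse the right-hand side to $-\delta_{kj}-\bigl(c_{j,k}(q)-\delta_{jk}\bigr)=-c_{j,k}(q)$. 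The main obstacle is precisely this partitioning of the neighbours of $k$: one must recognise that the outgoing half produces the first-arrow recurrence for $c_{k,j}(q)$ while the incoming half produces the last-arrow recurrence for $c_{j,k}(q)$, a separation that is only available because of the admissible numbering.
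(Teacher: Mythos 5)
Your argument is correct. It runs on the same engine as the paper's proof --- reduce to a lower-triangular $C_q$ via the admissible numbering, peel the reflections off the product $\Phi_q=S_1\cdots S_n$ one at a time, and close each step with the two path-decomposition recurrences for the $c_{ij}(q)$ (remove the first arrow, resp.\ the last arrow, of a path; these are exactly facts (2)--(4) in the paper's proof) --- but you execute it from the opposite end and on a single column. The paper proves by increasing induction on $k$ that $C_qS_1^T\cdots S_k^T=\left(-C_k^T\,|\,C_{n-k}\right)$, so its intermediate object is a matrix that is hybrid \emph{across} columns; you fix $j$, use Proposition \ref{prop2-6} to restate the claim as $\Phi_q\,\uldim_q I(j)=-\uldim_q P(j)$, and show by downward induction that $S_k\cdots S_n\,\uldim_q I(j)$ is the vector that is hybrid \emph{within} a column, with entries $c_{mj}(q)$ for $m<k$ and $-c_{jm}(q)$ for $m\ge k$. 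Your version changes only one scalar entry per step, which makes the sign bookkeeping $-\delta_{kj}-\lrb{c_{jk}(q)-\delta_{jk}}=-c_{jk}(q)$ completely transparent, and it proves the identity $\uldim_q P(j)=-\Phi_q\,\uldim_q I(j)$ directly rather than deducing it afterwards as the paper does in Section 6; the paper's matrix-level induction, in exchange, lands immediately on $C_qS_1^T\cdots S_n^T=-C_q^T$ without reassembling columns. Two cosmetic points: the column identifications you invoke are most directly parts (1)--(2) of Proposition \ref{prop2-6} (your (3)--(4) applied to the standard basis vectors $\uldim_q S(j)$ gives the same thing), and the base case is cleanest at $k=n+1$, where the claimed formula is just the definition of $\uldim_q I(j)$, so that your $k=n$ check becomes the first instance of the general inductive step. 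Neither affects correctness.
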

\begin{proof}  By Proposition \ref{prop2-7}, $C_q$ is invertible.

Without loss of generality,
we can assume that
$(1, 2,\cdots, n)$ is an admissible numbering of $Q$.
One knows that $q$-Cartan matrix $C_q$ of $Q$ is
$$C_q=\left(
    \begin{array}{ccccc}
      1 & 0 & 0 & \cdots & 0 \\
      c_{21}(q) & 1 & 0 & \cdots & 0 \\
      c_{31}(q) &c_{32}(q)  & 1 & \cdots & 0 \\
      \vdots &\vdots & \vdots &\ddots & \vdots\\
      c_{n1}(q) & c_{n2}(q) & c_{n3}(q) & \cdots & 1 \\
    \end{array}
  \right) \hbox{ and }
  S_1=\left(
    \begin{array}{ccccc}
      -1 & a_{21}q &   a_{31}q & \cdots & a_{n1}q \\
       0& 1 & 0 & \cdots & 0 \\
    0 &0  & 1 & \cdots & 0 \\
      \vdots &\vdots & \vdots &\ddots & \vdots\\
       &0 & 0 & \cdots & 1 \\
    \end{array}
  \right).
$$

To show that
$C_q^T =-\Phi_q C_q=-S_1\cdots S_n C_q$,
it is sufficient to show
$$C_q^TS_n^T\cdots S_1^T=-C_q$$
or equivalent
$$C_q S_1^T\cdots S_k^T=-C_q^T S_n^T\cdots S_{k+1}^T
=\left(-C_k^T| C_{n-k}\right),$$
where $C_k^T$ (or $C_{n-k}$) is the matrix formed by
the $k$ first columns of $C_q^T$
(or of the $(n-k)$ last columns of $C_q$.

We show that by induction on $k$.

Recall that
$$c_{ij}(q)=\sum_n \dim \left (\e_i(\kk Q)\e_j\right)_nq^n$$
is the $(i, j)$-coefficient of $C_q$.
Moreover, let $b_{ij}$ be the number of arrows from $i$ to
$j$. It is easy to see that
\begin{enumerate}
  \item $b_{ij}=0$ and $b_{ji}=a_{ji}$, for $i\leq j$;
  \item $c_{i+1, i}(q)=b_{i+1, i}q$;
  \item $c_{ii}(q)=1$ for each $i\in Q_0$;
  \item $c_{ij}(q)=q\sum\limits_{j\leq k<i} c_{ik}(q)b_{kj}=q\sum\limits_{j<k\leq i} b_{ik}c_{kj}(q)$,
  for $j<i$.
\end{enumerate}
For $k=1$, we then have
\begin{eqnarray*}
C_qS_1^T&=&\left(
    \begin{array}{ccccc}
      1 & 0 & 0 & \cdots & 0 \\
      c_{21}(q) & 1 & 0 & \cdots & 0 \\
      c_{31}(q) &c_{32}(q)  & 1 & \cdots & 0 \\
      \vdots &\vdots & \vdots &\ddots & \vdots\\
      c_{n1}(q) & c_{n2}(q) & c_{n3}(q) & \cdots & 1 \\
    \end{array}
  \right)\left(
    \begin{array}{ccccc}
      -1 & 0 & 0 & \cdots & 0 \\
      b_{21}q & 1 & 0 & \cdots & 0 \\
      b_{31}q &0  & 1 & \cdots & 0 \\
      \vdots &\vdots & \vdots &\ddots & \vdots\\
      b_{n1}q &0 & 0 & \cdots & 1 \\
    \end{array}
  \right)\\
  &=&\left(
    \begin{array}{ccccc}
      -1 & 0 & 0 & \cdots & 0 \\
      & & & &\\
      -c_{21}(q)+b_{21}q & 1 & 0 & \cdots & 0 \\
      -c_{31}(q)+b_{21}c_{32}(q)q+b_{31}q &c_{32}(q)  & 1 & \cdots & 0 \\
      \vdots &\vdots & \vdots &\ddots & \vdots\\
      -c_{n1}(q)+q\sum\limits_{2\leq i\leq n}b_{i1}c_{ni}(q) &c_{n2}(q) & c_{n3}(q) & \cdots & 1 \\
    \end{array}
  \right).
\end{eqnarray*}
By (2), (3) and (4), we get
$$-c_{21}(q)+b_{21}q=0, \ \cdots, \
-c_{n1}(q)+q\sum\limits_{2\leq i\leq n}b_{i1}c_{ni}(q)=0.$$
Hence
$$C_q S_1^T=\left(-C_1^T| C_{n-1}\right).$$

Now, we assume that the result holds for $k-1$. Hence
$$C_q S_1^T\cdots S_{k-1}^T=\left(-C_{k-1}^T| C_{n-k+1}\right).$$
Then for $k$, we have
\begin{eqnarray*}
&&C_qS_1^T\cdots S_k^T=\left(-C_{k-1}^T| C_{n-k+1}\right)S_k^T\\
&=&\left(
     \begin{array}{ccccccccc}
       -1 & -c_{21}(q) & \cdots & -c_{k-2,1}(q) & -c_{k-1,1}(q) & 0 & 0 & \cdots & 0 \\
       0 & -1 & \cdot & -c_{k-2,2}(q) & -c_{k-1,2}(q) & 0 & 0 & \cdots & 0 \\
       \vdots & \vdots & \ddots & \vdots & \vdots & \vdots & \vdots & \vdots & \vdots \\
       0 & 0 & \cdots & -1 & -c_{k-1, k-2}(q) & 0 &0 & \cdots & 0 \\
       & & & & & & & &\\
       0 & 0 & \cdots & 0 & -1 & 0 & 0 & \cdots & 0 \\
       &  & & & & & & &\\
       0 & 0 & \cdots & 0 & 0 & 1 & 0 & \cdots & 0 \\
       0 & 0 & \cdots & 0 & 0 & c_{k+1, k}(q) & 1 & \cdots & 0 \\
       \vdots & \vdots & \vdots & \vdots & \vdots & \vdots & \vdots & \ddots & \vdots \\
       0 & 0 & \cdots & 0 & 0 & c_{n,k}(q) & c_{n,k+1}(q) & \cdots & 1 \\
     \end{array}
   \right)
\end{eqnarray*}
\begin{eqnarray*}
   &\times &
   \left(
     \begin{array}{cccccccc}
       1 & 0& \cdots & 0 & b_{k1}q & 0 & \cdots & 0 \\
       0 & 1 & \cdots & 0 & b_{k2}q & 0 & \cdots & 0 \\
       \vdots & \vdots & \ddots & \vdots & \vdots  &  &  & \vdots \\
       0 & 0 & \cdots & 1 & b_{k, k-1}q & 0 & \cdots & 0 \\
       0 & 0 & \cdots & 0 & -1 & 0 & \cdots & 0 \\
       0 & 0 & \cdots & 0 & b_{k+1,k}q & 1 & \cdots & 0 \\
      \vdots & \vdots &  & \vdots & \vdots & \vdots & \ddots & \vdots \\
       0 & 0 & \cdots & 0 &b_{n,k}\,q & 0 & \cdots & 1 \\
     \end{array}
   \right)
   =\left(
             \begin{array}{ccc}
               -C_{k-1}^T & \left|
                              \begin{array}{c}
                                -q\sum\limits_{1\leq i\leq k-1}b_{ki}c_{i1}(q) \\
                                -q\sum\limits_{2\leq i\leq k-1}b_{ki}c_{i2}(q) \\
                                \vdots \\
                                -q\sum\limits_{k-2\leq i\leq k-1}b_{ki}c_{i,k-2}(q) \\
                                -qb_{k, k-1} \\
                                -1 \\
                                -c_{k+1, k}(q)+qb_{k+1, k} \\
                                \vdots \\
                               -c_{n,k}(q)+q\sum\limits_{j=k+1}^n c_{n, j}(q)b_{j, k} \\
                              \end{array}
                            \right|
                & C_{n-k} \\
             \end{array}
           \right).
\end{eqnarray*}
The conclusion follows from (2), (3) and (4).

The proof is finished.
\end{proof}

\section{$q$-Cartan (resp. Coxeter) matrix  of $\sigma_iQ$}

Let $Q$ be an orientation of $\G$, which is a finite, connected, and acyclic quiver
and $n=|Q_0|$. For every  $a\in Q_0$, a new quiver
$$\sigma_aQ=(Q'_0, Q'_1, s', t')$$
is defined as follows:
\begin{enumerate}
  \item[(i)] if $s(\a)\ne a$ and $t(\a)\ne a$, then
  $s'(\a')=s(\a)$ and $t'(\a')=t(\a)$;
  \item[(ii)] if $s(\a)=a$ or $t(\a)=a$, then
  $s'(\a')=t(\a)$ and $t'(\a')=s(\a).$
\end{enumerate}

An admissible sequence of sinks in a quiver $Q$ is defined to be a total
ordering $(a_1, \cdots, a_n)$ of all the points in $Q$ such that
\begin{enumerate}
  \item $a_1$ is a sink in $Q$; and
  \item $a_i$ is a sink in $\sigma_{a_{i-1}}\cdots \sigma_{a_1}Q$, for
  every $2\leq i\leq n.$
\end{enumerate}

If $i$ is a sink of $Q$, the $q$-Cartan matrix of $\sigma_iQ$ is
denoted by $\left(\sigma_iC\right)_q$ and
$\left(\sigma_i\Phi\right)_q$ the $q$-Coxeter matrix of $\sigma_i Q$.

\begin{thm} \label{thm2} For a sink $i$ of  an acyclic orientation $Q$ of \ $\G$,  we have
$$ \left(\sigma_i\Phi\right)_q=
S_i\Phi_q S_i, \quad \left(\sigma_iC\right)_q=S_i C_q S_i^T.$$
\end{thm}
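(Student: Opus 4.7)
The plan is to treat the two identities separately, starting with the $q$-Coxeter formula. First, I would verify that if $(i,a_2,\ldots,a_n)$ is an admissible numbering of $Q$ (which exists because $i$ is a sink), then $(a_2,\ldots,a_n,i)$ is an admissible numbering of $\sigma_iQ$. Passing from $Q$ to $\sigma_iQ$ only reverses the arrows incident to $i$, so the full sub-quiver on $Q_0\setminus\{i\}$ is the same in both, and hence $a_2,\ldots,a_n$ still form a sink sequence there. Moreover $a_2$ is a global sink of $\sigma_iQ$, because its outgoing arrows in $Q$ either stayed inside that sub-quiver (impossible by sinkhood there) or pointed to $i$ (now reversed), and finally $i$ is a sink of $\sigma_{a_n}\cdots\sigma_{a_2}\sigma_iQ=Q$. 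With these numberings the definition of the $q$-Coxeter matrix gives $\Phi_q=S_iS_{a_2}\cdots S_{a_n}$ and $(\sigma_i\Phi)_q=S_{a_2}\cdots S_{a_n}S_i$, and Proposition~\ref{prop3-2}(1) (i.e.\ $S_i^2=I$) immediately yields $S_i\Phi_qS_i=(\sigma_i\Phi)_q$.

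For the $q$-Cartan identity I would proceed by direct entry-by-entry comparison. Denote the entries of $C_q$ and $(\sigma_iC)_q$ by $c_{jk}(q)$ and $c'_{jk}(q)$, respectively. A direct computation from the defining formula of $S_i$ shows that left multiplication by $S_i$ alters only the $i$-th row of any matrix $A$, via
\[
(S_iA)_{ik}=-A_{ik}+q\sum_{l\in\Gamma_0(i)}a_{li}A_{lk},
\]
while right multiplication by $S_i^T$ alters only the $i$-th column, via $(AS_i^T)_{ji}=-A_{ji}+q\sum_{l\in\Gamma_0(i)}a_{li}A_{jl}$. One therefore has to check $(S_iC_qS_i^T)_{jk}=c'_{jk}(q)$ in the four cases determined by whether $j$ and $k$ equal $i$.

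The comparison uses four elementary facts about paths, all immediate from interpreting $c_{jk}(q)$ as the length generating function of paths from $j$ to $k$. Since $i$ is a sink of $Q$: (a) $c_{ii}(q)=1$ and $c_{ik}(q)=0$ for $k\neq i$; and (b) every non-trivial path from $j\neq i$ to $i$ ends with an arrow $l\to i$, yielding $c_{ji}(q)=q\sum_{l}a_{li}c_{jl}(q)$. Since $i$ becomes a source in $\sigma_iQ$: (c) paths between two vertices $j,k\neq i$ cannot touch $i$ in either $Q$ (sink) or $\sigma_iQ$ (source), hence $c'_{jk}(q)=c_{jk}(q)$; and (d) every non-trivial path from $i$ to $k\neq i$ in $\sigma_iQ$ factors as an initial arrow $i\to l$ (of multiplicity $a_{li}$) followed by a $Q$-path from $l$ to $k$, giving $c'_{ik}(q)=q\sum_{l}a_{li}c_{lk}(q)$. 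The entry $(j,k)$ with $j,k\neq i$ matches by (c); the entry $(i,k)$ with $k\neq i$ matches by (a) and (d); and the entry $(j,i)$ with $j\neq i$ reduces to $-c_{ji}(q)+q\sum_{l}a_{li}c_{jl}(q)=0$, which is exactly identity~(b).

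The main obstacle I expect is the diagonal entry $(i,i)$, where both the row and the column modifications act. Expanding $(S_iC_qS_i^T)_{ii}$ and using (a) to clear $c_{il}(q)$ for $l\neq i$ leaves
\[
(S_iC_qS_i^T)_{ii}=1-q\sum_{l\in\Gamma_0(i)}a_{li}c_{li}(q)+q^2\sum_{l,l'\in\Gamma_0(i)}a_{li}a_{l'i}c_{l'l}(q),
\]
and I must show this equals $c'_{ii}(q)=1$. Substituting (b) into the middle term turns $q\sum_{l}a_{li}c_{li}(q)$ into $q^2\sum_{l,l'}a_{li}a_{l'i}c_{ll'}(q)$, which agrees with the rightmost sum after the symmetric relabeling $l\leftrightarrow l'$ of dummy indices; the two quadratic-in-$q$ sums cancel, leaving the desired $1$ and completing the proof.
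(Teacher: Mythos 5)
Your proof is correct and follows essentially the same route as the paper: the Coxeter identity via the shifted admissible numbering $(a_2,\ldots,a_n,i)$ of $\sigma_iQ$ together with $S_i^2=1$, and the Cartan identity by a direct computation of $S_iC_qS_i^T$ using the sink recursion $c_{ji}(q)=q\sum_{l}a_{li}c_{jl}(q)$. The only cosmetic difference is that the paper first normalizes the numbering so that $C_q$ is lower triangular (which makes the $(i,i)$ entry immediate from the vanishing of the off-diagonal entries of the $i$-th column of $C_qS_i^T$), whereas you handle the diagonal entry by cancelling the two quadratic sums; both are valid.
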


\begin{proof}
Assume that $i$ is a sink point of $Q$, then there exists an admissible numbering
$(i=a_1, \cdots, a_n)$  of $Q$.
It is easily seen that if
$(i, a_2, \cdots, a_n)$ is an admissible numbering of $Q$, then
$(a_2,\cdots, a_n, a_1=i)$ is an admissible numbering of $\sigma_{i}Q$.

On the other hand,  $S_{i}^2=1$ and
$$\left(\sigma_i\Phi\right)_q=S_{a_2}\cdots S_{a_n} S_i$$
by the definition of $q$-Coxeter matrix.
We have
$$\left(\sigma_{i}\Phi\right)_q=S_{a_2}\cdots S_{a_n} S_{i}=S_{i}\left(
S_{i}S_{a_2}\cdots S_{a_n}\right) S_{i}=S_i\Phi_q S_i.$$

As the proof of Theorem \ref{thm1}, we can assume that
$(1, 2,\cdots, n)$ is an admissible numbering of $Q$
without loss of generality, where $i=1$.

Under this assumption, one knows that $q$-Cartan matrix $C_q$ of $Q$ is
$$C_q=\left(
    \begin{array}{ccccc}
      1 & 0 & 0 & \cdots & 0 \\
      c_{21}(q) & 1 & 0 & \cdots & 0 \\
      c_{31}(q) &c_{32}(q)  & 1 & \cdots & 0 \\
      \vdots &\vdots & \vdots &\ddots & \vdots\\
      c_{n1}(q) & c_{n2}(q) & c_{n3}(q) & \cdots & 1 \\
    \end{array}
  \right)
$$
and
 $$S_1=\left(
    \begin{array}{ccccc}
      -1 & b_{21}q &   b_{31}q & \cdots & b_{n1}q \\
       0& 1 & 0 & \cdots & 0 \\
    0 &0  & 1 & \cdots & 0 \\
      \vdots &\vdots & \vdots &\ddots & \vdots\\
       &0 & 0 & \cdots & 1 \\
    \end{array}
  \right).$$

Consider the $q$-Cartan matrix of $\sigma_1 Q$:
$1$ is a sink of $Q$, then $1$ is a source of $\sigma_1 Q$.
The entry  of the $(1, j)$ $(j=1, \cdots, n)$  of the first row of  $(\sigma_1C)_q$
is $ q\sum\limits_{j\leq k\leq n} b_{k1}c_{kj}(q),$
where $b_{ij}$ is defined as that
in the proof of Theorem \ref{thm1}.
And the other entries of $(i,j)$ ($i\ne 1$) are equal to $c_{ij}(q)$.

On the other hand,  we see that
\begin{eqnarray*}
C_qS_1^T
  =\left(
    \begin{array}{ccccc}
      -1 & 0 & 0 & \cdots & 0 \\
      0 & 1 & 0 & \cdots & 0 \\
      0 &c_{32}(q)  & 1 & \cdots & 0 \\
      \vdots &\vdots & \vdots &\ddots & \vdots\\
      0 &c_{n2}(q) & c_{n3}(q) & \cdots & 1 \\
    \end{array}
  \right).
\end{eqnarray*}
and
\begin{eqnarray*}
S_1C_qS_1^T&=&\left(
    \begin{array}{ccccc}
      -1 & b_{21}q &   b_{31}q & \cdots & b_{n1}q \\
       0& 1 & 0 & \cdots & 0 \\
    0 &0  & 1 & \cdots & 0 \\
      \vdots &\vdots & \vdots &\ddots & \vdots\\
       &0 & 0 & \cdots & 1 \\
    \end{array}
  \right)\left(
    \begin{array}{ccccc}
      -1 & 0 & 0 & \cdots & 0 \\
      0 & 1 & 0 & \cdots & 0 \\
      0 &c_{32}(q)  & 1 & \cdots & 0 \\
      \vdots &\vdots & \vdots &\ddots & \vdots\\
      0 &c_{n2}(q) & c_{n3}(q) & \cdots & 1 \\
    \end{array}
  \right)\\
  &=&\left(
    \begin{array}{ccccc}
      1 & q\sum\limits_{2\leq k\leq n} b_{k1}c_{k2}(q)  & q\sum\limits_{3\leq k\leq n} b_{k1}c_{k3}(q) & \cdots & b_{n1}q \\
      & & & &\\
      0 & 1 & 0 & \cdots & 0 \\
      0 &c_{32}(q)  & 1 & \cdots & 0 \\
      \vdots &\vdots & \vdots &\ddots & \vdots\\
      0 &c_{n2}(q) & c_{n3}(q) & \cdots & 1 \\
    \end{array}
  \right).
\end{eqnarray*}
This is just the $q$-Cartan matrix $(\sigma_1C)_q$ of $\sigma_1 Q$.

The proof is finished.
\end{proof}

\section{General cases}
In this section, we assume that $(Q, I)$ is a connected  homogeneous bound quiver  of finite global dimension  with $|Q_0|=n$, and $C_q$ is the $q$-Cartan matrix of $(Q, I)$.
\begin{defn}
The $q$-Euler characteristic of $(Q, I)$ is  the $\Z$-bilinear (nonsymmetric) form $\lr{-, -}: \Z^n\times \Z^n\to \Z[q]$
defined by
$$\lr{\x, \y}_q=\x^t C_q^{-1}\y,$$ for $\x, \y\in\Z^n$.
\end{defn}
Now we denote
$C_q^{-1}=\lrb{b_{ij}(q)}_{n\times n}$ and define a symmetric $\Z$-bilinear form as follows:
$$\lrb{\x, \y}_q=\frac{1}{2}\lrb{\y^TC_q^{-1}\x+\x^TC_q^{-1}\y}=\frac{1}{2}\lrb{\lr{\x,\y}_q+\lr{\y, \x}_q}
=\frac{1}{2}\x^TA_q\y,$$
where
$A_q=C_q^{-1}+\lrb{C_q^{-1}}^T=(a_{ij}(q))_{n\times n}$ is a symmetric matrix over $\Z[q]$, and
$$a_{ij}(q)=\left\{
              \begin{array}{ll}
                b_{ij}(q), & \hbox{ if } i>j; \\
                2b_{ii}(q), & \hbox{ if } i=j;\\
                b_{ji}(q), & \hbox{ if } i<j.
              \end{array}
            \right.
$$

Let $\e_1, \e_2, \cdots, \e_n$ be a basis of $\R^n$. We define a {\it $q$-simple reflection $\g{i}$} for $i\in \mb{1, n}$ as
$$\g{i}(\x)=\x-2\lrb{\x, \e_i}_q\e_i$$
for each $i\in Q_0$.

\begin{lem} \label{lem-x} We have
\begin{enumerate}
              \item If $a_{ii}(q)=2$, then $\g{i}^2=1$,
              \item  If $a_{ij}(q)=0$ for some $i, j\in Q_0$, then $\g{i}\g{j}=\g{j}\g{i}$.
  \end{enumerate}
\end{lem}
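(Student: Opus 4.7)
The plan is to unpack the definition of $\sigma_{i,q}$ to a formula on basis vectors, and then verify both identities by direct computation on the basis $\e_1,\dots,\e_n$.

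First I would record the explicit action on basis vectors. Since $A_q$ is symmetric with entries $a_{ij}(q)$ and $(\x,\y)_q=\tfrac{1}{2}\x^T A_q\y$, one has $(\e_j,\e_i)_q=\tfrac12 a_{ji}(q)=\tfrac12 a_{ij}(q)$, hence
\begin{equation*}
\g{i}(\e_j)=\e_j-2(\e_j,\e_i)_q\e_i=\e_j-a_{ji}(q)\e_i.
\end{equation*}
This formula is what I would use throughout; the rest is linear algebra.

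For part (1), under the hypothesis $a_{ii}(q)=2$ the formula above gives $\g{i}(\e_i)=\e_i-2\e_i=-\e_i$, so $\g{i}^2(\e_i)=\e_i$. For $j\ne i$ I compute
\begin{equation*}
\g{i}^2(\e_j)=\g{i}\bigl(\e_j-a_{ji}(q)\e_i\bigr)=\e_j-a_{ji}(q)\e_i-a_{ji}(q)(-\e_i)=\e_j,
\end{equation*}
and conclude $\g{i}^2=\id$ by linearity.

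For part (2), I exploit the symmetry $a_{ij}(q)=a_{ji}(q)=0$, which forces $\g{i}(\e_j)=\e_j$ and $\g{j}(\e_i)=\e_i$. Then for any basis vector $\e_k$,
\begin{equation*}
\g{i}\g{j}(\e_k)=\g{i}\bigl(\e_k-a_{kj}(q)\e_j\bigr)=\e_k-a_{ki}(q)\e_i-a_{kj}(q)\e_j+a_{kj}(q)\,a_{ji}(q)\,\e_i,
\end{equation*}
and the last term vanishes by assumption. A symmetric computation for $\g{j}\g{i}(\e_k)$ yields the same expression $\e_k-a_{ki}(q)\e_i-a_{kj}(q)\e_j$, so $\g{i}\g{j}=\g{j}\g{i}$ by linearity.

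There is no real obstacle here: the statement is purely formal once one writes $\g{i}$ on basis vectors. The only thing to keep in mind is the symmetry of $A_q$, which is built into its definition $A_q=C_q^{-1}+(C_q^{-1})^T$, and which is what allows us to interchange $a_{ij}(q)$ and $a_{ji}(q)$ freely in the cross term of part (2).
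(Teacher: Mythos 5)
Your proof is correct and follows essentially the same route as the paper: write $\g{i}$ on basis vectors as $\e_j\mapsto\e_j-a_{ij}(q)\e_i$ (using the symmetry of $A_q$, so your $a_{ji}(q)$ and the paper's $a_{ij}(q)$ agree) and verify both identities by direct computation. No gaps.
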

\begin{proof}
It is easy to see that
$$\g{i}(\e_j)=\e_j-2(\e_i, \e_j)_q\e_i=\e_j-\lrb{\e_i^T A_q\e_j}\e_i
=\e_j-a_{ij}(q) \e_i.$$

If $a_{ii}(q)=2$, we have $\g{i}(\e_i)=-\e_i$, thus
$$\g{i}^2(\e_t)=\g{i}\lrb{\e_t-a_{it}(q)\e_i}=\e_t
-a_{it}(q)\e_i-a_{it}(q)\g{i}(\e_i)=\e_t$$
for all $1\leq t\leq n$. Hence $\g{i}^2=1$.

If $a_{ij}(q)=0$ for some $i$ and $j$, then $a_{ji}(q)=0$,
$\g{i}(\e_j)=\e_i$ and $\g{j}(\e_i)=\e_j$.
Therefore, for $1\leq t\leq n$,
$$\g{i}\g{j}(\e_t)=\g{i}(\e_t-a_{jt}(q)\e_j)
=\e_t-a_{it}(q) \e_i-a_{jt}(q)\e_j$$
is symmetric in $i$ and $j$. In this case,  we get
$\g{i}\g{j}=\g{j}\g{i}$.
\end{proof}

Let
$(a_1, \cdots, a_n)$ be
an admissible numbering of  $Q$, and $S_{a_i}$  the matrix of $\g{a_i}$ for the canonical basis
$\big\{ \e_i | \ i\in \mb{1, n}\big\}$.
The matrix
$$\Phi_q=S_{a_1}\cdots S_{a_{n-1}}S_{{a_n}}$$
is called the {\bf $q$-Coxeter matrix of $(Q, I)$}.

Here, the $q$-Coxeter transformation $\Phi_q$ also only depends on the orientation
$(Q, I)$, not on the admissible numbering chosen.
To see this fact, the following basic fact is used:

{\it if the square matrices $A_1, A_2, A_3$ are invertible, then
$$\left(
    \begin{array}{ccc}
      A_1 & 0 & 0  \\
     0 & A_2 & 0 \\
     B_1 & B_2 & A_3
    \end{array}
  \right)^{-1}=\left(
    \begin{array}{ccc}
       A_1^{-1} & 0 & 0\\
      0 & A_2^{-1} &0\\
      -A_3^{-1}B_1A_1^{-1} & -A_3^{-1}B_2A_2^{-1} &A_3^{-1}
    \end{array}
  \right).$$
}
Now, we assume that $(a_1, \cdots, a_n)$ and
$(b_1, \cdots, b_n)$ are two admissible numberings of  $Q$, then there
exists an $i$ with $1\leq i\leq n$ such that $b_1=a_i$, because $b_1$ is a sink, there exists no paths from $a_k$
to $a_j$ with $j, k\leq i$ and therefore $c_{kj}(q)=0$ for all $j, k\leq i$.  Hence $b_{kj}(q)=0$ for all $j, k\leq i$ by the above fact. It follows that
$\g{a_j}\g{a_i}=\g{a_i}\g{a_j}$ for all $j<i$ by Lemma \ref{lem-x}.
Thus, the numbering $(a_i, a_1, \cdots, a_{i-1}, a_{i+1}, \cdots, a_n)$ is admissible and an obvious induction implies that
$$\g{a_1}\cdots \g{a_n}=\g{b_1}\cdots\g{b_n}.$$

In general, that $i$ and $j$ are not neighbours
in $(Q, I)$ does not imply that $\g{i}\g{j}=\g{j}\g{i}$ for generic parameter $q$.

\begin{ex}  The bound quiver $(Q, I):$
$$\xymatrix@=1.5cm{1&2&3
\ar@<.5ex>"1,1";"1,2"^\alpha
\ar@<-.5ex>"1,1";"1,2"_\beta
\ar@<.ex>"1,2";"1,3"_\delta
}$$
with the relation $I=\{\a\delta-\beta\delta\}$. Then
${\rm gldim}(Q, I)=2$ and
$$C_q = \mbox{{\small $\left( \begin{array}{ccc}
1 & 2q &q^2\\ 0 & 1 &q\\
0 & 0 &1
\end{array} \right)$}}
\hbox{ \
and \
 }
C_q^{-1}= \mbox{{\small $\left( \begin{array}{ccc}
1 & -2q &q^2\\ 0 & 1 &-q\\
0 & 0 &1\\
\end{array} \right)$}}.$$
The matrix of the $q$-reflection $\g{i}, i=1, 2,3 $ are given by
$$S_1=\left(
        \begin{array}{ccc}
          -1 & 2q & -q^2\\
          0 & 1 & 0 \\
          0 & 0 & 1 \\
        \end{array}
      \right), \quad S_2=\left(
        \begin{array}{ccc}
          1 & 0 & 0 \\
          2q & -1 & q \\
          0 & 0 & 1 \\
        \end{array}
      \right), \quad S_3=\left(
        \begin{array}{ccc}
          1 & 0 & 0 \\
          0 & 1 & 0 \\
          -q^2 & q & -1 \\
        \end{array}
      \right).$$
It is obvious that $S_1S_3\ne S_3S_1$ and hence $\g{1}\g{3}\ne \g{3}\g{1}$,
but $1$ and $3$ are not neighbours.

However, it is discovered that
$$S_3S_2S_1=\left(
        \begin{array}{ccc}
          1 & 0 & 0 \\
          2q & -1 & q \\
          q^2 & -q & q^2-1 \\
        \end{array}
      \right)\left(
        \begin{array}{ccc}
          -1 & 2q & -q^2\\
          0 & 1 & 0 \\
          0 & 0 & 1 \\
        \end{array}
      \right)=\left(
                \begin{array}{ccc}
                  -1 & 2q & -q^2 \\
                  -2q & 4q^2-1 & -2q^3+q \\
                  -q^2 & 2q^3-q & -q^4+q^2-1 \\
                \end{array}
              \right)
$$
and
$$C_q^TC_q^{-1}=\left(
     \begin{array}{ccc}
       1 & 0 & 0 \\
       2q & 1 & 0 \\
       q^2 & q & 1 \\
     \end{array}
   \right)\left(
            \begin{array}{ccc}
              1 & -2q & q^2 \\
              0 & 1 & -q \\
              0 & 0 & 1 \\
            \end{array}
          \right)=\left(
                    \begin{array}{ccc}
                      1 & -2q &q^2 \\
                      2q & 1-4q^2 & 2q^3-q \\
                      q^2 & -2q^3+q & q^4-q^2+1 \\
                    \end{array}
                  \right).
$$
Hence $\Phi_q=-C_q^TC_q^{-1}$.
This is not accidental.
\end{ex}

We have the following result.

\begin{thm} \label{thm6-3} Let $C_q$  be the $q$-Cartan matrix of a connected acyclic homogeneous bound quiver
$(Q, I)$ of finite global dimension. Then
$$\Phi_q=-C_q^TC_q^{-1}.$$
\end{thm}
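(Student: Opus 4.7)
My plan is to prove the equivalent identity $\Phi_q C_q = -C_q^T$; this is meaningful because $C_q$ is invertible by Proposition \ref{prop2-7}, and right-dividing by $C_q$ recovers the stated formula. First I would pass WLOG to the case where $(1,2,\ldots,n)$ is an admissible numbering of $Q$, which is legitimate since the discussion preceding the theorem shows that $\Phi_q$ is independent of the admissible numbering chosen. Under this convention all arrows run from larger to smaller vertices, so $c_{ij}(q)=0$ for $i<j$ and $C_q$ is lower triangular with ones on the diagonal; the same then holds for $C_q^{-1}$. In particular $b_{ii}(q)=1$ and $a_{ii}(q)=2$, so $S_i^2=I$ by Lemma \ref{lem-x}(1). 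Using $(\x,\e_i)_q=\frac{1}{2}\e_i^T A_q\x$ together with the symmetry of $A_q$, each reflection takes the compact form $S_i=I-\e_i\,\e_i^T A_q$, whose key feature is that left-multiplication by $S_i$ alters only the $i$-th row of any matrix: $S_i M=M-\e_i(\e_i^T A_q M)$.

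I would then compute $\Phi_q C_q=S_1 S_2\cdots S_n C_q$ by applying $S_n,S_{n-1},\ldots,S_1$ to $C_q$ in succession from the left, and prove by induction on $k$ that $M^{(k)}:=S_{n-k+1}\cdots S_n C_q$ agrees with $C_q$ on its first $n-k$ rows while its $j$-th row for $n-k+1\le j\le n$ equals $-(C_q\e_j)^T$. The case $k=n$ is precisely $\Phi_q C_q=-C_q^T$. For the inductive step at $i=n-k+1$, splitting $\e_i^T A_q M^{(k-1)}=\sum_j a_{ij}(q)\,\e_j^T M^{(k-1)}$ according to whether row $j$ of $M^{(k-1)}$ is unmodified ($j\le i$) or already modified ($j>i$) yields
\begin{equation*}
\e_i^T A_q M^{(k-1)}=\sum_{j<i}a_{ij}(q)\,\e_j^T C_q\,+\,2\,\e_i^T C_q\,-\,\sum_{j>i}a_{ij}(q)\,(C_q\e_j)^T.
\end{equation*}
Substituting $a_{ij}(q)=b_{ij}(q)$ for $j<i$ and $a_{ij}(q)=b_{ji}(q)$ for $j>i$, the lower-triangular unipotence of $C_q^{-1}$ delivers the two key identities $\sum_{j<i}b_{ij}(q)\,\e_j^T=\e_i^T C_q^{-1}-\e_i^T$ and $\sum_{j>i}b_{ji}(q)\,\e_j^T=\e_i^T C_q^{-T}-\e_i^T$; multiplying the first by $C_q$ and the second by $C_q^T$ makes the entire right-hand side telescope to $\e_i^T(C_q+C_q^T)$. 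Subtracting from the unmodified old row $\e_i^T C_q$ then produces the new $i$-th row $-\e_i^T C_q^T=-(C_q\e_i)^T$, closing the induction.

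The main obstacle is the coordinated bookkeeping in the inductive step, specifically the alignment of the decomposition $A_q=C_q^{-1}+C_q^{-T}$ with the "unmodified/modified" partition of rows at stage $k$: the strictly-lower part of $A_q$ pairs with the unmodified rows and telescopes through $C_q$ with a positive sign, while the strictly-upper part pairs with the modified rows and telescopes through $C_q^T$ with a negative sign, the two contributions combining to cancel the surviving $\e_i^T C_q$ cleanly. Once the matrix formulation $S_i=I-\e_i\,\e_i^T A_q$ is recognized and the two triangularity identities are in hand, the argument is essentially mechanical and specializes to the proof of Theorem \ref{thm1} when $I=0$.
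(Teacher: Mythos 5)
Your proof is correct and is essentially the paper's own argument: both exploit the lower unitriangularity of $C_q$ and $C_q^{-1}$ under an admissible numbering and verify $\Phi_qC_q=-C_q^T$ one row/column at a time by induction, using the identities $\sum_{j\le k\le i}c_{ik}(q)b_{kj}(q)=\delta_{ij}$. The only difference is presentational --- the paper multiplies $C_q$ on the right by $S_1^T,\dots,S_n^T$ and tracks columns, whereas you left-multiply by $S_n,\dots,S_1$ and track rows via the rank-one form $S_i=I-\e_i\e_i^TA_q$, which is the same computation transposed.
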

\begin{proof}
 Without loss of generality,
we assume that
$(1, 2,\cdots, n)$ is an admissible numbering of $Q$. One knows that the $q$-Cartan matrix $C_q$ of $Q$ must be of
$$C_q=\left(
    \begin{array}{ccccc}
      1 & 0 & 0 & \cdots & 0 \\
      c_{21}(q) & 1 & 0 & \cdots & 0 \\
      c_{31}(q) &c_{32}(q)  & 1 & \cdots & 0 \\
      \vdots &\vdots & \vdots &\ddots & \vdots\\
      c_{n1}(q) & c_{n2}(q) & c_{n3}(q) & \cdots & 1 \\
    \end{array}
  \right).
$$
It follows that the inverse $C_q^{-1}=(b_{ij}(q))_{n\times n}$ of $C_q$ is also
a lower triangular matrix with $b_{ii}(q)=1$. Hence
\begin{equation}\label{eqn6-1}
 \sum_{j\leq k\leq i} c_{ik}(q)b_{kj}(q)=\sum_{j\leq k\leq i} b_{ik}(q)c_{kj}(q)=\delta_{ij}
\end{equation}
and
$$b_{ij}(q)=0 \ \hbox{ for } i<j, $$
where $c_{ii}(q)=b_{ii}(q)=1$ for all $i=1, \cdots, n$.
Hence we have
\begin{equation}\label{eqn6-2}
 c_{ij}(q)=-\sum_{j<k\leq i} c_{ik}(q)b_{kj}(q)=-\sum_{j\leq k<i} b_{ik}(q)c_{kj}(q)
\end{equation}

Let us determine the matrix $S_i$ of $q$-reflection $\g{i}$.

Recall that
$$\g{i}(\e_j)=\e_j-\lrb{\e_i^T A_q\e_j}\e_i
=\e_j-a_{ij}(q) \e_i,$$
where
$A_q=(a_{ij}(q))_{n\times n}$ is a symmetric matrix over $\Z[q]$, and
$$a_{ij}(q)=\left\{
              \begin{array}{ll}
                b_{ij}(q), & \hbox{ if } i>j; \\
               2b_{ii}(q), & \hbox{ if } i=j;\\
                b_{ji}(q), & \hbox{ if } i<j.
              \end{array}
            \right.
$$
Hence the matrix $S_i$ of $\g{i}$ is
$$S_i=\left(
        \begin{array}{ccccccc}
          1 & 0 & \cdots & 0 &0 & \cdots & 0 \\
          0 & 1 & \cdots & 0 &0 & \cdots & \vdots \\
          \vdots & \vdots &\ddots&\vdots &\vdots &\cdots  & \vdots \\
            0& 0 &\cdots &0 &0 &\cdots &0\\
          -b_{i1}(q) & -b_{i2}(q) & \cdots & 1-2b_{ii}(q) &-b_{i+1,i}(q) & \cdots & -b_{ni}(q)\\
          0 &0  &\cdots &0 &1 &\cdots &\vdots\\
          \vdots & \vdots &\cdots  & \vdots &\vdots&  & \vdots \\
          0& 0 & \cdots &0 &0& \ddots &0\\
          0 & 0 & \cdots & 0 &0 & \cdots & 1 \\
        \end{array}
      \right).
$$

Noting that if $i$ is a sink, then $1-\e_i^TA_q\e_i=1-2b_{ii}(q)=-1$.

To show that
$C_q^T =-\Phi_q C_q=-S_1\cdots S_n C_q$,
it is sufficient to show
$$C_q^TS_n^T\cdots S_1^T=-C_q$$
or equivalent
$$C_q S_1^T\cdots S_k^T=-C_q^T S_n^T\cdots S_{k+1}^T
=\left(-C_k^T| C_{n-k}\right),$$
where $C_k^T$ (or $C_{n-k}$) is the matrix formed by
the $k$ first columns of $C_q^T$
(or of the $(n-k)$ last columns of $C_q$.

We show the fact by induction on $k$.
For $k=1$, we then have
\begin{eqnarray*}
C_qS_1^T&=&\left(
    \begin{array}{ccccc}
      1 & 0 & 0 & \cdots & 0 \\
      c_{21}(q) & 1 & 0 & \cdots & 0 \\
      c_{31}(q) &c_{32}(q)  & 1 & \cdots & 0 \\
      \vdots &\vdots & \vdots &\ddots & \vdots\\
      c_{n1}(q) & c_{n2}(q) & c_{n3}(q) & \cdots & 1 \\
    \end{array}
  \right)\left(
    \begin{array}{ccccc}
      -1 & 0 & 0 & \cdots & 0 \\
      -b_{21}(q) & 1 & 0 & \cdots & 0 \\
      -b_{31}(q) &0  & 1 & \cdots & 0 \\
      \vdots &\vdots & \vdots &\ddots & \vdots\\
      -b_{n1}(q) &0 & 0 & \cdots & 1 \\
    \end{array}
  \right)\\
  &=&\left(
    \begin{array}{ccccc}
      -1 & 0 & 0 & \cdots & 0 \\
      & & & &\\
      -\sum\limits_{1\leq i\leq 2}c_{2i}(q)b_{i1}(q) & 1 & 0 & \cdots & 0 \\
      -\sum\limits_{1\leq i\leq 3}c_{3i}(q)b_{i1}(q) &c_{32}(q)  & 1 & \cdots & 0 \\
      \vdots &\vdots & \vdots &\ddots & \vdots\\
      -\sum\limits_{1\leq i\leq n}c_{ni}(q)b_{i1}(q) &c_{n2}(q) & c_{n3}(q) & \cdots & 1 \\
    \end{array}
  \right).
\end{eqnarray*}
By Eqn. (\ref{eqn6-1}), we get that
$$\sum\limits_{1\leq i\leq 2}c_{2i}(q)b_{i1}(q)=\sum\limits_{1\leq i\leq 3}c_{3i}(q)b_{i1}(q)=\cdots=
\sum\limits_{1\leq i\leq n}c_{ni}(q)b_{i1}(q)=0.$$
Hence
$$C_q S_1^T=\left(-C_1^T| C_{n-1}\right).$$

Now, we assume that the result holds for $k-1$. This means that
$$C_q S_1^T\cdots S_{k-1}^T=\left(-C_{k-1}^T| C_{n-k+1}\right).$$
Then for $k$, we have
\begin{eqnarray*}
&&C_qS_1^T\cdots S_k^T=\left(-C_{k-1}^T| C_{n-k+1}\right)S_k^T\\
&=&\left(
     \begin{array}{ccccccccc}
       -1 & -c_{21}(q) & \cdots & -c_{k-2,1}(q) & -c_{k-1,1}(q) & 0 & 0 & \cdots & 0 \\
       0 & -1 & \cdot & -c_{k-2,2}(q) & -c_{k-1,2}(q) & 0 & 0 & \cdots & 0 \\
       \vdots & \vdots & \ddots & \vdots & \vdots & \vdots & \vdots & \vdots & \vdots \\
       0 & 0 & \cdots & -1 & -c_{k-1, k-2}(q) & 0 &0 & \cdots & 0 \\
       & & & & & & & &\\
       0 & 0 & \cdots & 0 & -1 & 0 & 0 & \cdots & 0 \\
       &  & & & & & & &\\
       0 & 0 & \cdots & 0 & 0 & 1 & 0 & \cdots & 0 \\
       0 & 0 & \cdots & 0 & 0 & c_{k+1, k}(q) & 1 & \cdots & 0 \\
       \vdots & \vdots & \vdots & \vdots & \vdots & \vdots & \vdots & \ddots & \vdots \\
       0 & 0 & \cdots & 0 & 0 & c_{n,k}(q) & c_{n,k+1}(q) & \cdots & 1 \\
     \end{array}
   \right)\\
   &\times &
   \left(
     \begin{array}{cccccccc}
       1 & 0& \cdots & 0 & -b_{k1}(q) & 0 & \cdots & 0 \\
       0 & 1 & \cdots & 0 & -b_{k2}(q) & 0 & \cdots & 0 \\
       \vdots & \vdots & \ddots & \vdots & \vdots  &  &  & \vdots \\
       0 & 0 & \cdots & 1 & -b_{k, k-1}(q) & 0 & \cdots & 0 \\
       0 & 0 & \cdots & 0 & -1 & 0 & \cdots & 0 \\
       0 & 0 & \cdots & 0 & -b_{k+1,k}(q) & 1 & \cdots & 0 \\
      \vdots & \vdots &  & \vdots & \vdots & \vdots & \ddots & \vdots \\
       0 & 0 & \cdots & 0 & -b_{nk}(q) & 0 & \cdots & 1 \\
     \end{array}
   \right)=\left(
             \begin{array}{ccc}
               -C_{k-1}^T & \left|
                              \begin{array}{c}
                                \sum\limits_{1\leq i\leq k-1}b_{ki}(q)c_{i1}(q) \\
                                \sum\limits_{2\leq i\leq k-1}b_{ki}(q)c_{i2}(q) \\
                                \vdots \\
                                \sum\limits_{k-2\leq i\leq k-1}b_{ki}(q)c_{i,k-2}(q) \\
                                b_{k, k-1}(q) \\
                                -1 \\
                                -\sum\limits_{k\leq i\leq k+1}c_{k+1, i}(q) b_{i,k}(q)\\
                                \vdots \\
                               -\sum\limits_{k\leq i\leq n}c_{n, i}(q) b_{i,k}(q) \\
                              \end{array}
                            \right|
                & C_{n-k} \\
             \end{array}
           \right).
\end{eqnarray*}

The conclusion follows from (\ref{eqn6-1}) and (\ref{eqn6-2}).

The proof is finished.
\end{proof}

In general, if $(Q, I)$ is not {\it acyclic} homogeneous bound quiver, we
can not use simple $q$-reflections to define a $q$-Coxeter transformation
 and $q$-Coxeter matrix since we have not an admissible numbering of $(Q, I)$. However, we can still define  {\bf $q$-Coxeter matrix} for any finite homogeneous bound quiver $(Q, I)$ of finite global dimension as follows.

\begin{defn} Let $C_q$ be the $q$-Cartan matrix of an homogeneous bound quiver $(Q, I)$ of finite global dimension. The $q$-Coxeter matrix of $(Q, I)$ is defined by the matrix
$$\Phi_q=-C_q^TC_q^{-1}.$$
\end{defn}

\begin{prop} \begin{enumerate}
               \item $\uldim_q P(i)=-\Phi_q\cdot \uldim_q I(i)$ for $i\in Q_0$.
               \item $\lr{\x, \y}_q=-\lr{\Phi_q\y, \x}_q=\lr{\Phi_q\x, \Phi_q\y}_q$ for
               $\x, \y\in\Z^n$.
             \end{enumerate}
\end{prop}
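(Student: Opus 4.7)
The plan is to reduce both statements to straightforward matrix manipulations, using $\Phi_q = -C_q^T C_q^{-1}$ (the definition) together with Proposition \ref{prop2-6}, parts (3) and (4), which give the identities $\uldim_q P(i) = C_q^T\cdot \uldim_q S(i)$ and $\uldim_q I(i) = C_q\cdot \uldim_q S(i)$, and the definition $\lr{\x,\y}_q = \x^T C_q^{-1}\y$. Since $\det C_q = \pm 1$ by Proposition \ref{prop2-7}, the matrix $C_q^{-1}$ lies in $GL_n(\Z[q])$, so none of these manipulations are formal only.

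For part (1), I would substitute the definition of $\Phi_q$ and then Proposition \ref{prop2-6}(4):
\[
-\Phi_q\cdot \uldim_q I(i) \;=\; C_q^T C_q^{-1}\cdot \bigl(C_q\cdot \uldim_q S(i)\bigr) \;=\; C_q^T \cdot \uldim_q S(i) \;=\; \uldim_q P(i),
\]
the last equality by Proposition \ref{prop2-6}(3). This is a one-line calculation.

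For part (2), I first compute $\Phi_q^T = -(C_q^{-1})^T C_q$. For the first equality, I would expand
\[
-\lr{\Phi_q\y,\x}_q \;=\; -(\Phi_q\y)^T C_q^{-1}\x \;=\; -\y^T\Phi_q^T C_q^{-1}\x \;=\; \y^T (C_q^{-1})^T C_q C_q^{-1}\x \;=\; \y^T (C_q^{-1})^T \x.
\]
Since this is a scalar, it equals its own transpose, which gives $\x^T C_q^{-1}\y = \lr{\x,\y}_q$. For the second equality, I would compute
\[
\Phi_q^T C_q^{-1}\Phi_q \;=\; \bigl(-(C_q^{-1})^T C_q\bigr)\, C_q^{-1}\, \bigl(-C_q^T C_q^{-1}\bigr) \;=\; (C_q^{-1})^T C_q^T C_q^{-1} \;=\; C_q^{-1},
\]
so $\lr{\Phi_q\x,\Phi_q\y}_q = \x^T \Phi_q^T C_q^{-1}\Phi_q\y = \x^T C_q^{-1}\y = \lr{\x,\y}_q$.

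There is no real obstacle here; both statements are formal consequences of the definition of $\Phi_q$ and the already-established formulas for $\uldim_q P(i)$ and $\uldim_q I(i)$. The only mild point to be careful about is the transposition trick used in the first equality of part (2) (turning the scalar $\y^T (C_q^{-1})^T \x$ into $\x^T C_q^{-1}\y$), which is why the bilinear form $\lr{-,-}_q$ is non-symmetric yet the identity $-\lr{\Phi_q\y,\x}_q = \lr{\x,\y}_q$ still holds.
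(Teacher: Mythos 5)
Your proof is correct and follows essentially the same route as the paper: both parts reduce to direct matrix manipulation using $\Phi_q=-C_q^TC_q^{-1}$, Proposition \ref{prop2-6}(3)--(4), and the transposition trick for the scalar $\x^TC_q^{-1}\y$. The only cosmetic difference is that for the second equality in (2) the paper applies the first identity twice, whereas you verify $\Phi_q^TC_q^{-1}\Phi_q=C_q^{-1}$ directly; both are one-line computations.
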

\begin{proof}
(1)  By applying Proposition \ref{prop2-6}, we get
$$\uldim_q S(i)=\left(C_q^{-1}\right)^T \uldim_q P(i)$$
and
$$\uldim_q I(i)=C_q\cdot\uldim_q S(i)=C_q\left(C_q^{-1}\right)^T \uldim_q P(i).$$
Hence
$$\uldim_q P(i)=-\Phi_q\cdot \uldim_q I(i)$$
for $i\in Q_0.$

(2)
\begin{eqnarray*}
\lr{\x, \y}_q&=&\x^T C_q^{-1} \y=\y^T \left(C_q^{-1}\right)^T \x\\
&=&\y^T\left(C_q^{-1}\right)^T C_q^T C_q^{-1}\x\\
&=&\left(C_q^TC_q^{-1}\y\right)^T C_q^{-1}\x\\
&=&-\lr{\Phi_q\y, \x}_q.
\end{eqnarray*}
This gives the first equality. The second follows on applying the first twice.
\end{proof}

At the end of this paper, we give an example.

\begin{ex}
$$(Q, I): \xymatrix@=1.5cm{1&2
\ar@<-.1ex>"1,1";"1,2"_\alpha
\ar@<-1.8ex>"1,1";"1,2"_\beta
\ar@<-1.4ex>"1,2";"1,1"_\delta
}$$
where $I=\{\alpha\delta, \ \beta\delta\}$.
One sees that  ${\rm gldim}(Q, I)=2$ and the $q$-Cartan matrix of~$(Q, I)$
is
$$C_q=\left(
        \begin{array}{cc}
          1 & 2q \\
          q & 1+2q^2 \\
        \end{array}
      \right)$$
and $\det(C_q)=1$. The inverse $C_q^{-1}$ of $C_q$ is
$$C_q^{-1}=\left(
        \begin{array}{cc}
          1+2q^2 & -2q \\
          -q & 1 \\
        \end{array}
      \right).
$$
The $q$-Coxeter matrix of $(Q, I)$ is
$$\Phi_q=-C_q^TC_q^{-1}=\left(
                          \begin{array}{cc}
                            -1-q^2 & q \\
                            -q(1+2q^2) & 2q^2-1 \\
                          \end{array}
                        \right).
$$
\end{ex}

\end{document}